\newtheorem{theorem}{Theorem}[section]
\newtheorem{lemma}[theorem]{Lemma}         
\newtheorem{proposition}[theorem]{Proposition}    
\newtheorem{corollary}[theorem]{Corollary}         
\theoremstyle{definition}
\newtheorem{definition}[theorem]{Definition}
\theoremstyle{remark}
\newtheorem{remark}[theorem]{Remark}
\newcommand{\bbR}{\mathbb{R}}
\newcommand{\bbH}{\mathbb{H}}
\newcommand{\bbP}{\mathbb{P}}
\newcommand{\bbZ}{\mathbb{Z}}
\newcommand{\bbN}{\mathbb{N}}
\newcommand{\bbQ}{\mathbb{Q}}
\newcommand{\cM}{\mathfrak{M}}
\newcommand{\cP}{\mathcal{P}}
\newcommand{\cG}{\mathcal{G}}
\newcommand{\bep}{\boldsymbol{\varepsilon}}
\newcommand{\ep}{\varepsilon}
\newcommand{\0}{\mathbf{0}}
\newcommand{\y}{\mathbf{y}}
\newcommand{\F}{\mathbf{F}}
\newcommand{\al}{\alpha}
\newcommand{\be}{\beta}
\newcommand{\imp}{\rightarrow}
\newcommand{\eqi}{\longleftrightarrow}
\newcommand{\ra}{\rangle}
\newcommand{\la}{\langle}
\newcommand{\st}{\operatorname{\mathbf{st}}}
\newcommand{\fin}{\operatorname{\mathbf{fin}}}
\newcommand{\dom}{\operatorname{dom}}
\newcommand{\ran}{\operatorname{ran}}
\newcommand{\shh}{\operatorname{\mathbf{sh}}}
\newcommand{\ZF}{\mathbf{ZF}}
\newcommand{\ZFC}{\mathbf{ZFC}}
\newcommand{\SPOT}{\mathbf{SPOT}}
\newcommand{\SCOT}{\mathbf{SCOT}}
\newcommand{\BST}{\mathbf{BST}}
\newcommand{\IST}{\mathbf{IST}}
\newcommand{\T}{\mathbf{T}}
\newcommand{\ACC}{\mathbf{ACC}}
\newcommand{\ADC}{\mathbf{ADC}}
\newcommand{\SP}{\mathbf{SP}}
\newcommand{\N}{\mathbf{O}}
\newcommand{\AC}{\mathbf{AC}}
\newcommand\astx{{}^\ast\!X}
\subjclass[2020]{Primary 26E35; Secondary 34A12}
\begin{document}
\title {Peano and Osgood theorems via effective infinitesimals}

\author{Karel Hrbacek} \address{Department of Mathematics\\ The City
  College of CUNY\\ New York, NY 10031\\} 
\email{khrbacek@icloud.com}

\author{Mikhail G. Katz} \address{Department of Mathematics\\ Bar Ilan
  University\\ Ramat Gan 5290002 Israel\\}
\email{katzmik@math.biu.ac.il}

\keywords{nonstandard analysis, axiom of choice, effective proofs,
  Peano existence theorem, Osgood theorem}

\date{\today}

\begin{abstract}
We provide choiceless proofs using infinitesimals of the global
versions of Peano's existence theorem and Osgood's theorem on maximal
solutions.  We characterize all solutions in terms of infinitesimal
perturbations.  Our proofs are more effective than traditional
non-infinitesimal proofs found in the literature.  The background
logical structure is the internal set theory $\SPOT$, conservative
over $\ZF$.
\end{abstract}

\maketitle

\tableofcontents

\section{Introduction}

Nonstandard analysis (NSA) is sometimes criticized for its implicit
dependence on strong forms of the Axiom of Choice ($\AC$).  Indeed,
if~$\ast$ is the mapping that assigns to each $X \subseteq \bbN$ its
nonstandard extension $\astx$, and if $\nu \in {}^\ast \bbN \setminus
\bbN$ is an unlimited integer, then the set $U = \{ X \subseteq \bbN
\,\mid\, \nu \in \astx\}$ is a nonprincipal ultrafilter over $\bbN$.
Of course strong forms of $\AC$, such as Zorn's Lemma, are a staple of
modern set-theoretic mathematics, but it is undesirable to have to
rely on them for results in ordinary mathematics dealing with Calculus
or differential equations (see Simpson~\cite{Si} for a discussion of
the distinction between \emph{set-theoretic} and \emph{ordinary}
mathematics). The traditional proofs of most theorems in ordinary
mathematics are \emph{effective}: they do not use $\AC$.%
\footnote{In this paper the word \emph{effective} means \emph{without
the Axiom of Choice}.  In reverse mathematics, constructive
mathematics and other areas, it usually has more restrictive meaning.}
A few results, such as the equivalence of the $\ep$-$\delta$
definition and the sequential definition of continuity for functions
$f: X \subseteq \bbR \to \bbR$, require weak forms of $\AC$, notably the Axiom of
Countable Choice ($\ACC$) or the stronger Axiom of Dependent Choice
($\ADC$). These weak forms are generally accepted in ordinary
mathematics; they do not imply the strong consequences of $\AC$ such
as the existence of nonprincipal ultrafilters or the Banach--Tarski
paradox (see Jech~\cite{J1}, Howard and Rubin~\cite{Ho}).  We refer to
such proofs as \emph{semi-effective}.

An answer to the above criticism of NSA is offered by recent
developments in the axiomatic/syntactic approach that dates back to
the work of Hrbacek~\cite{Hr78} and Nelson~\cite{nelson}.  A number of
axiomatic systems for NSA have been proposed, of which Nelson's $\IST$
is the best known. We refer to Kanovei and Reeken's
monograph~\cite{KR} for a comprehensive discussion of such axiomatic
frameworks.  An accessible introduction to $\IST$ is
Robert~\cite{Robert}.

The theory $\IST$ includes the axioms of $\ZFC$, so one could ask
whether the dependence on $\AC$ could be avoided by deleting $\AC$
from the axioms constituting $\IST$.  It turns out that in the
resulting theory one can still prove the existence of nonprincipal
ultrafilters, by an argument similar to the one given above for the
model-theoretic approach (see~\cite{H1} and the paragraph following
Lemma~\ref{equivalent} below).

In~\cite{HK} the authors have developed an axiomatic system for NSA
with the acronym $\SPOT$, a subtheory of $\IST$.  The theory $\SPOT$
is a conservative extension of $\ZF$.  This means that every statement
in the $\in$-language provable in $\SPOT$ is provable already in
$\ZF$. In particular, $\AC$ and the existence of nonprincipal
ultrafilters are not provable is $\SPOT$, because they are not
provable in $\ZF$.  A stronger theory $\SCOT$ which is a conservative
extension of $\ZF + \ADC$ is also considered there.  Hence proofs in
$\SPOT$ are effective, and proofs in $\SCOT$ are semi-effective.

Some examples of constructions in nonstandard analysis formalized in
these theories are given in~\cite{HK}.  In particular, it is shown
there how the Riemann integral can be defined in $\SPOT$ using
partitions into infinitesimal subintervals, and the countably additive
Lebesgue measure in $\SCOT$ using counting measures. The expository
article~\cite{HK3} presents in $\SCOT$ various nonstandard arguments
related to compact sets and continuity.

In Section~\ref{starzf} we state the axioms of $\SPOT$, list some of
their consequences, and prove a stronger version of the Standard Part
principle $\SP$ that is crucial in the preliminary
Section~\ref{examples}.

In Sections~\ref{peano} - \ref{osgoodsection} we give nonstandard
proofs in $\SPOT$ of the global versions of Peano's and Osgood's
theorems concerning the existence of solutions of ordinary
differential equations. While the nonstandard approach using Euler
approximations with an infinitesimal step that we employ is well known
for local solutions (see e.g.~\cite{A}, p.\;30), we offer three
innovations:
\begin{itemize}
\item
The axiomatic system $\SPOT$ enables us to use infinitesimal methods
without the underlying assumption of the existence of nonprincipal
ultrafilters or any other strong form of $\AC$.
\item
We construct global, ie, noncontinuable, solutions rather than local
solutions.
\item
Traditional proofs of the existence of noncontinuable solutions
typically depend on $\ADC$; see Remark~\ref{others}.  By contrast, our
proof does not assume any form of $\AC$ at all.
\end{itemize}

We first prove (Theorem~\ref{Theorem}) that every infinitesimal
perturbation $\bep$ determines a unique global solution $y_{\bep}$
(some or all of these solutions may be the same).  We next prove
(Lemma~\ref{tags}) that every solution that is not global is a
restriction of some $y_{\bep}$. Hence every solution is either global
or can be extended to a global one (Corollary~\ref{extension}) and
every global solution is of the form $y_{\bep}$ for some infinitesimal
perturbation~$\bep$ (Theorem~\ref{perturb}).  Finally we state the
global Osgood's theorem (Theorem~\ref{osgood}). The proof shows first
that there is a local maximal solution (Lemma~\ref{L2} and the last
part of the sentence that precedes it).  The last paragraph of the
proof obtains the global maximal solution as the union of all local
ones.


\section{Theory {\bf SPOT}} 
\label{starzf}

By an $\in$-language we mean the language that contains a binary
membership predicate $\in$ and is enriched by defined symbols for
constants, relations, functions and operations customary in
traditional mathematics. For example, it contains names $\bbN$ and
$\bbR$ for the sets of natural and real numbers; they are viewed as
defined in the traditional way ($\bbN$ is the least inductive set,
$\bbR$ is defined in terms of Dedekind cuts or Cauchy sequences). The
symbols $<, +$ and $\times$ denote the ordering, addition and
multiplication of real numbers, and so on without further explanation.
The classical theories $\ZF$ and $\ZFC$ are formulated in the
$\in$-language.

 The language of $\SPOT$ contains an additional unary predicate 
$\st$.  $\SPOT$ is a subtheory of $\textbf{IST}$ and its bounded version 
$\textbf{BST}$ (see~\cite{KR}).  
We use
$\forall$ and $\exists$ as quantifiers over sets and $\forall^{\st}$
and $\exists^{\st}$ as quantifiers over standard sets.  The theory
$\SPOT$ has the following axioms.

\bigskip
$\ZF$ (Zermelo - Fraenkel Set Theory)

\bigskip
$\T$ (Transfer) 
Let $\phi$ be an $\in$-formula with standard parameters. Then
$$\forall^{\st} x\; \phi(x)  \imp  \forall x\; \phi(x).$$

$\N$ (Nontriviality)  \quad
$\exists \nu \in \bbN\; \forall^{\st} n \in \bbN\; (n \ne \nu)$.

\bigskip
$\SP'$  (Standard Part)
$$ \forall A \subseteq \bbN \; \exists^{\st} B \subseteq \bbN \;
\forall^{\st} n \in \bbN \; (n \in B \eqi n \in A). $$

\bigskip

The theory $\SPOT$ proves the following results (see~\cite{HK}).

\begin{lemma}
\label{downclosed} 
Standard natural numbers precede all nonstandard ones:
$$\forall^{\st} n \in \bbN \; \forall m \in \bbN \;(m < n \imp \st(m) ).$$
\end{lemma}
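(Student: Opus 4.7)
The plan is to argue by contradiction: suppose there exist a standard $n \in \bbN$ and a nonstandard $m \in \bbN$ with $m < n$. First I would apply $\SP'$ to the set $A = \{k \in \bbN : k \leq m\}$ to obtain a standard $B \subseteq \bbN$ such that $k \in B \eqi k \leq m$ holds for every standard $k \in \bbN$.

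Next I would use $\T$ to show $B$ is finite. For every standard $k \in B$ we have $k \leq m < n$, so the implication $k \in B \imp k < n$ is an $\in$-formula whose only parameters $B$ and $n$ are standard. By $\T$ it then holds for all $k \in \bbN$, so $B \subseteq \{0, \ldots, n-1\}$ is finite, and it is nonempty because $0$ is standard and $0 \leq m$, hence $0 \in B$.

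Let $M = \max B$. The equalities $M = \max B$ and $M' = M+1$ are given by $\in$-formulas in the standard parameters $B$ and $M$, respectively. Applying the existential (dual) form of $\T$ to the statements $\exists x\,(x = \max B)$ and $\exists y\,(y = M+1)$, both $M$ and $M+1$ are standard. Instantiating the defining equivalence of $B$ at these standard witnesses gives $M \leq m$ (since $M \in B$) and $m < M+1$ (since $M+1 \notin B$). Hence $M = m$, contradicting $M$ being standard while $m$ is nonstandard.

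The main subtlety I expect is policing the use of Transfer: $\T$ applies only to $\in$-formulas with standard parameters, so the predicate $\st$ must not appear inside any transferred formula. In particular, the standardness of $\max B$ and of $M+1$ should be established by the dual existential form of $\T$ applied to $\in$-formulas defining them uniquely, rather than by any appeal to a ``standard functions preserve standardness'' slogan, which in $\SPOT$ is itself a consequence of $\T$ that must be invoked explicitly.
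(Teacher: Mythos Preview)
Your argument is correct. The contradiction is assembled cleanly: $\SP'$ produces a standard $B$ that agrees with $\{k:k\le m\}$ on standard inputs, Transfer with the standard parameters $B,n$ bounds $B$ inside $\{0,\dots,n-1\}$, and the dual form of $\T$ then certifies that $M=\max B$ and $M+1$ are standard, so the defining equivalence of $B$ at these two points pins down $m=M$. Your caveat about only transferring $\in$-formulas with standard parameters is exactly the right hygiene here.

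As for comparison: the paper does not supply a proof of this lemma at all; it merely records it among ``the following results'' that $\SPOT$ proves and refers the reader to \cite{HK}. So there is nothing in the present paper to compare your argument against. It is worth noting, however, that your route through $\SP'$ is not gratuitous: Transfer alone yields that $0$ is standard and that standardness is closed under successor and predecessor, but turning that into ``every $m<n$ is standard'' for an \emph{arbitrary} standard $n$ requires exactly the kind of uniform device your use of $\SP'$ provides (forming a standard set that captures the initial segment below $m$). The argument you give is essentially the standard one in the $\SPOT$/$\BST$ literature and would be at home in \cite{HK}.
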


Note that $\{0, 1, \ldots, n-1\}$ is a finite set for every $n \in \bbN$; it is nonstandard when $n$ is nonstandard.

\begin{lemma}[Countable Idealization]
\label{countideal} 
Let $\phi$ be an $\in$-formula with arbitrary parameters.
$$\forall^{\st}  n \in \bbN\;\exists x\; \forall m \in \bbN\; (m \le n \;\imp  \phi(m,x))\eqi 
\exists x \; \forall^{\st} n \in \bbN \; \phi(n,x).$$
\end{lemma}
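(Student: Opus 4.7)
The equivalence splits into an easy direction and a non-trivial overspill direction. For $\Leftarrow$, assume $x$ satisfies $\phi(n, x)$ for every standard $n$; the same $x$ witnesses the left-hand side, since for any standard $n$ and any $m \le n$, Lemma~\ref{downclosed} forces $m$ to be standard, so $\phi(m, x)$ holds.

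For $\Rightarrow$, I would run an overspill argument based on $\N$ and $\T$. Use $\N$ to fix a nonstandard $\nu \in \bbN$ and apply $\ZF$ Separation to form
$$
C = \{\, m \le \nu : \exists x\; \forall k \le m\; \phi(k, x)\,\}.
$$
This is legitimate because $\phi$ is an $\in$-formula and Separation accepts arbitrary parameters. By the hypothesis together with Lemma~\ref{downclosed} (every standard natural is $\le \nu$), $C$ contains every standard element of $\bbN$.

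Now exploit the well-foundedness of $\bbN$. If $C = \{0, 1, \ldots, \nu\}$, then any $x$ witnessing membership at $m = \nu$ satisfies $\phi(k, x)$ for all $k \le \nu$, in particular for all standard $k$. Otherwise let $k$ be the least element of $\{0, 1, \ldots, \nu\} \setminus C$. Then $k$ is nonstandard (every standard lies in $C$), and closure of standards under successor---a consequence of the existential form of $\T$ applied to $\exists m\,(m = (k-1)+1)$---forces $k-1$ to be nonstandard as well. By minimality $k-1 \in C$, yielding $x$ with $\phi(m, x)$ for all $m \le k-1$; every standard $n$ satisfies $n < k$ and hence $n \le k-1$, giving $\phi(n, x)$.

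The only subtlety is checking that each ingredient lives in $\SPOT$: $\N$ supplies the nonstandard $\nu$, $\T$ supplies closure of standards under successor, and $\ZF$ Separation produces $C$ with arbitrary parameters. Crucially no witness-selection function is built---a single $x$ is extracted only at the very end from the overspill step---so no form of $\AC$ enters, consistent with the paper's effective program.
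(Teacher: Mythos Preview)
The paper does not actually prove this lemma; it is stated among ``results (see~\cite{HK})'' and no argument is given in the present text. Your overspill proof is correct and is essentially the standard derivation of Countable Idealization from the $\SPOT$ axioms: form the internal set $C$ by Separation (legitimate since $\phi$ is an $\in$-formula with arbitrary parameters), observe that $C$ contains all standard naturals by hypothesis and Lemma~\ref{downclosed}, and then extract a nonstandard element of $C$ either as $\nu$ itself or as the predecessor of the least omitted element. The use of $\T$ to show that the successor of a standard natural is standard (hence $k-1$ nonstandard) is also correct. One small point worth making explicit: in the second case you need $k>0$ so that $k-1$ exists; this holds because $0$ is standard (it is definable, hence standard by $\T$) and therefore $0\in C$.
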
 

The  dual form of Countable Idealization is
$$
\exists^{\st}  n \in \bbN\;\forall x\; \exists m \in \bbN\; (m \le n \,\wedge\,  \phi(m,x))
\eqi \forall x \; \exists^{\st} n \in \bbN \; \phi(n,x) .$$

Countable Idealization easily implies the following more familiar form. We use  $\forall^{\st \fin}$ and $\exists^{\st \fin}$ as quantifiers
over standard finite sets.

\begin{corollary} \label{corcountideal}
Let $\phi$ be an $\in$-formula with arbitrary parameters.  
 For every standard countable
set~$A$
\[
\forall^{\st \fin} a \subseteq A \, \exists x \, \forall y \in a\;
\phi(x,y) \eqi \exists x\, \forall^{\st} y \in A\; \phi(x,y) .
\]
\end{corollary}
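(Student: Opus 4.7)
The plan is to reduce the corollary directly to Lemma~\ref{countideal} by reparametrizing the standard countable set $A$ via a standard enumeration. If $A = \emptyset$ both sides of the biconditional are trivially true, so I assume $A \ne \emptyset$. Since ``there is a surjection from $\bbN$ onto $A$'' is an $\in$-statement whose only parameter is the standard set $A$, existential Transfer (the contrapositive of the Transfer axiom applied to its negation) yields a standard surjection $f : \bbN \to A$. I then apply Lemma~\ref{countideal} to the $\in$-formula $\psi(n,x) := \phi(x, f(n))$, whose parameters are arbitrary, to obtain
\[
\forall^{\st} n \in \bbN\;\exists x\;\forall m \in \bbN\,(m \le n \imp \phi(x, f(m))) \;\eqi\; \exists x\;\forall^{\st} n \in \bbN\;\phi(x, f(n)).
\]

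The remaining work is to match each side of this equivalence to the corresponding side of the corollary. For the right-hand side, I will show $\forall^{\st} y \in A\;\phi(x,y) \eqi \forall^{\st} n \in \bbN\;\phi(x,f(n))$. The $(\Rightarrow)$ direction uses that $f(n)$ is standard whenever $f$ and $n$ are standard (Transfer applied to the functional statement), combined with $f(n) \in A$. For $(\Leftarrow)$, I apply existential Transfer to $\exists n \in \bbN(f(n) = y)$, whose parameters $f, y$ are standard, to produce a standard $n$ with $f(n) = y$.

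For the left-hand side, one direction is immediate: whenever $n$ is standard, $\{f(0),\dots,f(n)\}$ is a standard finite subset of $A$, so the corollary's LHS applied to this set gives the lemma's LHS. For the reverse, given standard finite $a \subseteq A$, I need to cover $a$ by some $\{f(0),\dots,f(n)\}$ with $n$ standard. The $\in$-formula $\exists n \in \bbN\,\forall y \in a\,\exists m \le n\,(f(m)=y)$ has only the standard parameters $a$ and $f$, and is provable in $\ZF$ (since $a$ is finite and $f$ is surjective), so existential Transfer supplies a standard witness $n$. The lemma's LHS at this $n$ then yields an $x$ with $\phi(x,f(m))$ for all $m \le n$, hence $\phi(x,y)$ for all $y \in a$.

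The only delicate point is this repeated appeal to existential Transfer to replace arbitrary witnesses by standard ones once all parameters are standard; once that pattern is in hand, the entire argument is routine bookkeeping and invokes no fragment of the Axiom of Choice. I do not anticipate any serious obstacle beyond being careful that each $\in$-formula to which Transfer is applied really does have all parameters standard.
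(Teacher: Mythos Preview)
Your argument is correct and is exactly the intended derivation: the paper states the corollary without proof, noting only that ``Countable Idealization easily implies the following more familiar form,'' and your reduction via a standard enumeration $f:\bbN\to A$ together with repeated existential Transfer is precisely the natural way to cash this out. The bookkeeping you flag as the only delicate point (ensuring all parameters are standard before invoking Transfer) is handled correctly throughout.
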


The axiom $\SP'$ is often stated and used in the
form
\begin{equation}
 \forall x \in \bbR \;(x \text{ limited } \imp \exists^{\st} r \in \bbR \;(x \approx r)) \tag{$\SP$}
\end{equation}
where $x$ is \emph{limited} iff $|x| \le n$ for some standard $n \in \bbN$, and $x \approx r$ iff $|x - r| \le 1/n $ for all standard $n \in \bbN$, $n \neq 0$; $x$ is \emph{infinitesimal} if $x \approx 0 \,\wedge\, x \neq 0$. 
The unique standard real number $r$ in $\SP$  is called the \emph{standard part of} $x$ or the \emph{shadow of} $x$; notation $r = \shh(x)$.

\bigskip
We have the following equivalence.
\begin{lemma}\label{spandsp'}
The statements $\SP' $ and $ \SP$ are equivalent (over the theory $\ZF
+ \N + \T$).
\end{lemma}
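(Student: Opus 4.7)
The plan is to prove each implication using an encoding between limited real numbers and subsets of $\bbN$, shuttling information between standard and nonstandard objects via~$\T$.

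For $\SP' \imp \SP$: let $x \in \bbR$ be limited, say $|x| \le n_0$ with $n_0$ standard. Fix a standard bijection $f : \bbN \to \bbQ$ (available in $\ZF$) and set $A = \{n \in \bbN : f(n) < x\}$. By $\SP'$ choose a standard $B \subseteq \bbN$ with $(n \in B) \eqi (n \in A)$ for every standard~$n$. The $\in$-formula ``$n \in B \imp f(n) \le n_0$'' has standard parameters and holds for every standard~$n$ (since then $n \in B$ gives $f(n) < x \le n_0$), so Transfer upgrades it to all $n$; hence $r := \sup f[B]$ is a standard real. To check $r \approx x$, fix a standard $k \ge 1$ and let $j^*$ be the largest integer in $\{-kn_0,\ldots,kn_0\}$ with $j^*/k \le x$; Lemma~\ref{downclosed} ensures every such integer is standard, so $j^*$ and the rationals $q_1 = j^*/k$, $q_2 = (j^*+1)/k$ are standard, with $q_1 \le x \le q_2$. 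Then $n_1 := f^{-1}(q_1 - 1/k)$ is a standard element of $A$, hence of $B$, giving $r \ge q_1 - 1/k$; a second Transfer application (with standard parameter $q_2$) shows $r \le q_2$. Hence $|r - x| \le 2/k$, and since $k$ was an arbitrary standard positive integer, $r \approx x$.

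For $\SP \imp \SP'$: given $A \subseteq \bbN$, set $x_A = \sum_{n \in A} 2 \cdot 3^{-(n+1)} \in [0,1]$ (the series converges in $\ZF$, its partial sums being monotone and bounded). By $\SP$ pick a standard $r \approx x_A$. The standard Cantor set $C$ contains both $r$ and $x_A$: $x_A$ by construction, and $r$ because $\bbR \setminus C$ is open, so if $r \notin C$ then Transfer produces a standard $\epsilon > 0$ with $(r-\epsilon, r+\epsilon) \cap C = \emptyset$, contradicting $r \approx x_A \in C$. Let $F : C \to \{0,2\}^{\bbN}$ be the standard canonical Cantor-digit map (e.g., the lexicographically smallest such expansion) and set $B = \{n \in \bbN : F(r)(n) = 2\}$, which is standard. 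Any two points of $C$ whose $n$-th Cantor digit differs are at distance $\ge 1/3^{n+1}$; for any standard~$n$, using the standard value $3^{n+2}$ gives $|r - x_A| \le 1/3^{n+2} < 1/3^{n+1}$, forcing the $n$-th digits of $r$ and $x_A$ to agree. Since $n \in A$ iff the $n$-th digit of $x_A$ equals $2$, this yields $(n \in A) \eqi (n \in B)$ for every standard $n$.

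The principal obstacle is to remain strictly within $\ZF + \N + \T$ and to apply Transfer correctly. Each Transfer step requires the underlying $\in$-formula to have only standard parameters; in the first direction, the auxiliary objects $j^*$, $q_1$, $q_2$ are chosen using the possibly nonstandard~$x$, but because they range over a standard finite set their actual values are standard, legitimising Transfer. In the second direction one must verify that the encoding $A \mapsto x_A$ and the decoding $r \mapsto F(r)$ are definable in the pure $\in$-language, and that the convergent monotone series defining $x_A$ requires no form of Choice.
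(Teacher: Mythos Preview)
The paper does not actually supply a proof of this lemma; it is stated without argument (the surrounding preliminary results are attributed to~\cite{HK}). So there is no ``paper's own proof'' to compare against, and your task reduces to giving a self-contained argument inside $\ZF + \N + \T$. Your proposal does this correctly.

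Both directions are sound. For $\SP' \Rightarrow \SP$ you realise the Dedekind cut of the limited $x$ through a standard enumeration of $\bbQ$, standardise it via $\SP'$, and read off the shadow as a supremum; the two Transfer steps are legitimate because $B$, $f$, $n_0$, $q_2$ are all standard. One small omission: before writing $r = \sup f[B]$ you should note that $B$ is nonempty (any standard $n$ with $f(n) < -n_0$ lies in $A$, hence in $B$), so the supremum is a real number. Also, Lemma~\ref{downclosed} is literally about natural numbers; the extension to the finite integer range $\{-kn_0,\ldots,kn_0\}$ goes through $|j| \le kn_0$, which is worth a word.

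For $\SP \Rightarrow \SP'$ the ternary Cantor encoding is clean. The key facts you use---that the $\{0,2\}$-ternary expansion of a point of $C$ is unique, and that two points of $C$ differing in the $n$-th digit are at distance at least $3^{-(n+1)}$---are standard and require no Choice; your remark that $F$ picks the lexicographically least expansion is in fact unnecessary, since uniqueness already holds on $C$. The Transfer step showing $r \in C$ is applied correctly (existential Transfer with standard parameters $r$ and $C$).

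In short: the argument is correct and would serve perfectly well as the missing proof. The only polishing needed is the nonemptiness of $B$ in the first direction.
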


$\SP'$ can  also be reformulated as an axiom schema (\emph{Countable
  Standardization for $\in$-formulas}):
\[
\begin{aligned}
\text{Let $\phi$ } & \text{be an $\in$-formula with arbitrary
  parameters. Then} \\& \exists^{\st} S\; \forall^{\st} n \; (n \in S
\eqi n \in \bbN \,\wedge\, \phi(n)).
\end{aligned}
\tag{$\SP''$}
\]

\begin{lemma}\label{equivalent}
The statement $\SP' $ and the schema $ \SP''$ are equivalent (over the theory
$\ZF + \N + \T$).
\end{lemma}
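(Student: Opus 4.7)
The plan is to establish both implications directly, noting that each direction is essentially a one-step translation between a single-set formulation and a schema formulation.

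For the easy direction $\SP'' \Rightarrow \SP'$, given an arbitrary (possibly nonstandard) $A \subseteq \bbN$, I would apply the schema $\SP''$ to the $\in$-formula $\phi(n) := n \in A$, taking $A$ itself as a parameter. The schema then produces a standard set $S$ such that $\forall^{\st} n\, (n \in S \eqi n \in \bbN \wedge n \in A)$. Setting $B := S \cap \bbN$ (which is standard by $\ZF$ and the fact that $\bbN$ is standard) yields a standard subset of $\bbN$ satisfying $\forall^{\st} n \in \bbN\,(n \in B \eqi n \in A)$, which is exactly $\SP'$.

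For the converse $\SP' \Rightarrow \SP''$, suppose $\phi$ is an $\in$-formula with arbitrary (possibly nonstandard) parameters. Since $\phi$ is in the $\in$-language, $\ZF$ Separation applies and produces the set $A := \{n \in \bbN : \phi(n)\}$. Apply $\SP'$ to this $A$ to obtain a standard $B \subseteq \bbN$ with $\forall^{\st} n \in \bbN\,(n \in B \eqi n \in A)$. For any standard $n$ (in general, not necessarily in $\bbN$), Lemma~\ref{downclosed} and the standardness of $\bbN$ let us read off: $n \in B$ implies $n \in \bbN$, and then $n \in B \eqi n \in A \eqi (n \in \bbN \wedge \phi(n))$; while if $n$ is a standard set not in $\bbN$, both sides fail. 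Taking $S := B$ thus witnesses the instance of $\SP''$ for $\phi$.

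The only subtlety I would flag is the legitimacy of forming $A = \{n \in \bbN : \phi(n)\}$ by $\ZF$ Separation when $\phi$ carries nonstandard parameters. This is fine because Separation in $\ZF$ accepts $\in$-formulas with arbitrary set parameters; the notion of standardness plays no role in the admissibility of Separation, only in the meaning of the quantifiers $\forall^{\st}$ and $\exists^{\st}$. No genuine use of $\T$ or $\N$ is needed for the equivalence itself; these are part of the ambient theory but the two statements translate into each other at the level of $\ZF$ alone.
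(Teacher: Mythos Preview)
Your argument is correct and matches the paper's approach; the paper's proof is a one-liner (form $A=\{n\in\bbN\mid\phi(n)\}$ by Separation and apply $\SP'$), and you have simply spelled out both directions in detail.

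Two minor remarks. First, your appeal to Lemma~\ref{downclosed} in the $\SP'\Rightarrow\SP''$ direction is unnecessary: the implication $n\in B\Rightarrow n\in\bbN$ follows directly from $B\subseteq\bbN$, and the case split on whether a standard $n$ lies in $\bbN$ needs nothing more. Second, your closing claim that no genuine use of $\T$ is required is not quite right: in the direction $\SP''\Rightarrow\SP'$ you need $\bbN$ to be standard and $S\cap\bbN$ to be standard, and both of these facts rely on Transfer (the standard sets being closed under $\in$-definable operations is exactly what $\T$ provides). So the ambient hypothesis $\T$ is in fact used, if only in this routine way.
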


\begin{proof}
 Apply  $\SP'$ to the set  $A = \{ n \in \bbN \,\mid\, \phi(n) \}$ ($A$ exists because $\phi$ is an $\in$-formula).
\end{proof}

Standardization in full strength, as postulated in $\IST$, $\BST$, etc., implies the existence of nonprincipal ultrafilters over $\bbN$: take a nonstandard $\nu \in \bbN$ and let $U$ be the standard subset of $\cP(\bbN)$ such that $\forall^{\st} X \subseteq \bbN\; (X \in U \eqi \nu \in X)$.  Nonetheless, two important special cases of Standardization can be proved in $\SPOT$.

The scope of Countable Standardization can be expanded to a larger class of formulas.

\begin{definition}
An $\st$-$\in$-formula  $\Phi(v_1, \ldots,v_r)$ is $\st$-\emph{prenex} if it is of the form 
$$\mathsf{Q}^{\st} u_1\ldots \mathsf{Q}^{\st} u_s \,\psi(u_1,\ldots ,u_s,v_1, \ldots,v_r)$$
 where $\psi $ is an $\in$-formula and each $\mathsf{Q}$ stands for $\exists$ or $\forall $. 

In other words, all occurrences of $\forall^{\st}$ or $\exists^{\st}$  in $\Phi$ 
appear before all occurrences of $\forall$ or $\exists$.

We use $\forall^{\st}_\bbN \,u\ldots$ and $\exists^{\st}_\bbN\, u \ldots$ as quantifiers over standard natural numbers; i.e. as shorthand for respectively 
$\forall u\,( u \in \bbN \,\wedge\, \st(u) \imp \ldots)$ and $\exists u \,(u \in \bbN \,\wedge\, \st(u) \,\wedge\, \ldots)$. 

An $\st_{\bbN}$-\emph{prenex} formula is a formula  of the form 
\[
\mathsf{Q}^{\st}_{\bbN} u_1\ldots \mathsf{Q}^{\st}_{\bbN} u_s \,\psi(u_1,\ldots u_s, v_1, \ldots,v_r)
\]
where $\psi$ is an $\in$-formula.
\end{definition}

The theory $\SPOT$ proves the following stronger version of Countable Standardization that is used repeatedly in this paper.

\begin{proposition} \emph{(Countable Standardization for $\st_{\bbN}$-prenex formulas)}
 Let $\Phi$ be an $\st_{\bbN}$-prenex formula with arbitrary parameters. Then
\[
\exists^{\st} S\; \forall^{\st} n \; (n \in S \eqi n \in \bbN
\,\wedge\, \Phi(n)).
\]
\end{proposition}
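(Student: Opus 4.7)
My plan is to proceed by (meta-)induction on the number $s$ of standard quantifiers $\mathsf{Q}^{\st}_{\bbN}$ appearing in the $\st_{\bbN}$-prenex formula $\Phi$. The base case $s = 0$ is immediate: here $\Phi$ is an $\in$-formula, and the conclusion is precisely the schema $\SP''$, which is provable in $\SPOT$ by Lemma~\ref{equivalent}.

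For the inductive step, assume the statement holds for every $\st_{\bbN}$-prenex formula with $s$ standard quantifiers, and let $\Phi(n) = \mathsf{Q}^{\st}_{\bbN} u\; \Phi'(u, n)$ where $\Phi'$ is $\st_{\bbN}$-prenex with $s$ standard quantifiers. I would first apply the inductive hypothesis to $\Phi'(u, n)$, regarded as a formula standardizable over $\bbN^2$ (via a standard pairing function $\bbN^2 \to \bbN$, so that the proposition lifts from $\bbN$ to $\bbN^2$). This yields a standard set $T \subseteq \bbN^2$ such that
\[
\forall^{\st}(u, n) \in \bbN^2\;\bigl((u, n) \in T \eqi \Phi'(u, n)\bigr).
\]
For any standard $n$, every $u$ quantified by $\mathsf{Q}^{\st}_{\bbN}$ is also standard, so $\Phi(n) \eqi \mathsf{Q}^{\st}_{\bbN} u\;\bigl((u, n) \in T\bigr)$. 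Since $n$ and $T$ are both standard parameters, Transfer converts $\mathsf{Q}^{\st}_{\bbN} u$ into $\mathsf{Q}\, u \in \bbN$, leaving the $\in$-formula
\[
\psi(n) \;:=\; \mathsf{Q}\, u \in \bbN\;\bigl((u, n) \in T\bigr),
\]
which agrees with $\Phi(n)$ on all standard $n$. A final application of $\SP''$ to $\psi$ then produces the desired standard set $S$.

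The crucial point---and the main obstacle---is the middle step, where Transfer is used to eliminate the outermost standard quantifier. This works only because the inductive hypothesis has packaged the \emph{arbitrary} (possibly non-standard) parameters of $\Phi$ into the \emph{standard} set $T$; it is the standardness of $T$ that renders the formula $(u, n) \in T$ a transferable $\in$-formula with standard parameters $n$ and $T$. The case $\mathsf{Q} = \exists$ is handled symmetrically, since Transfer also converts $\exists^{\st}$ into $\exists$ for formulas whose parameters are all standard.
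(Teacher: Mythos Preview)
Your proof is correct and follows essentially the same approach as the paper's: apply $\SP''$ to the $\in$-matrix $\psi$ (with its arbitrary parameters) to obtain a standard relation, then use Transfer to replace each $\mathsf{Q}^{\st}_{\bbN}$ by the corresponding ordinary quantifier over $\bbN$. The only difference is presentational: the paper does this in one pass---applying $\SP''$ once to $\psi$ on $\bbN^{s+1}$ to get a standard $R$, then forming $S = \bigcap_{n_1}\bigcup_{n_2}\cdots R_{n_1,\ldots,n_s}$ and invoking Transfer---whereas you peel off the quantifiers one at a time by meta-induction; unrolling your induction recovers the paper's argument (and your final invocation of $\SP''$ is in fact redundant, since $\psi$ already has only the standard parameter $T$, so $S=\{n\in\bbN\mid \psi(n)\}$ is standard outright).
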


Of course, $\bbN$ can be replaced by any standard countable set. 

\begin{proof}
We give the argument for a typical case
$$\forall^{\st}_\bbN  u_1\,\exists^{\st}_\bbN u_2\,\forall^{\st}_\bbN u_3 \;\psi(u_1,u_2,u_3, v).$$
By $\SP''$ there is a standard set $R$ such that for all standard $n_1, n_2, n_3, n$
$$\la  n_1, n_2, n_3, n \ra \in R \eqi \la n_1, n_2, n_3, n \ra \in \bbN^4 \,\wedge\, \psi(n_1, n_2, n_3,n).$$ 
We let $R_{n_1, n_2, n_3} = \{n \in \bbN \,\mid\, \la n_1, n_2, n_3, n \ra \in R\}$ and 
$$S = \bigcap_{n_1 \in \bbN}\, \bigcup_{n_2 \in \bbN}\,\bigcap_{n_3 \in \bbN} \,R_{n_1, n_2, n_3} .$$
Then $S$ is standard and for all standard $n$:
\[
\begin{aligned}
n \in S & \eqi 
 \forall n_1 \in \bbN\,\exists n_2 \in \bbN\,\forall n_3 \in \bbN\; (n \in R_{n_1, n_2, n_3} )
\\&\eqi
\text{(by Transfer) }
\forall^{\st}_{\bbN} n_1 \,\exists^{\st}_{\bbN} n_2 \;\forall^{\st}_{\bbN} n_3 \; (n \in R_{n_1, n_2, n_3}) 
\\&\eqi
\text{ (by definition of $R$) }
  \forall^{\st}_{\bbN} n_1 \,\exists^{\st}_{\bbN} n_2 \;\forall^{\st}_{\bbN} n_3 \;\psi(n_1, n_2, n_3,n)
\\&\eqi 
\Phi(n). 
\end{aligned}  
\]
\end{proof}

The second special case of Standardization involves $\st$-prenex  formulas with only the standard parameters.

\begin{lemma}\label{stinternal}
Let $\Phi (v_1,\ldots,v_r)$ be an $\st$-prenex  formula
with standard parameters. 
Then  $\quad$
$ \forall^{\st} S\;\exists^{\st} P\; \forall^{\st} v_1,\ldots,v_r\;$
$$  \la v_1,\ldots,v_r \ra \in P \eqi
\la v_1,\ldots,v_r \ra \in S \; \wedge \; \Phi (v_1,\ldots,v_r) .$$

\end{lemma}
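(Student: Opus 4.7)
The plan is to use Transfer, applied to each $\st$-quantifier from the innermost outward, to convert $\Phi$ into an equivalent $\in$-formula $\phi_0$ on standard inputs, and then cut $P$ out of $S$ by Separation, invoking Transfer once more to see that $P$ is standard.

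Write $\Phi(v_1,\ldots,v_r)$ in its $\st$-prenex form
$$\mathsf{Q}^{\st} u_1\ldots\mathsf{Q}^{\st} u_s\,\psi(u_1,\ldots,u_s,v_1,\ldots,v_r),$$
where $\psi$ is an $\in$-formula whose parameters are all standard. Delete each ``$\st$'' to define $\in$-formulas
$$\phi_s \equiv \psi, \qquad \phi_{i-1}(u_1,\ldots,u_{i-1},v_1,\ldots,v_r) \equiv \mathsf{Q}_i u_i\,\phi_i(u_1,\ldots,u_i,v_1,\ldots,v_r)$$
for $i=s,s-1,\ldots,1$, where $\mathsf{Q}_i$ is the $i$-th quantifier in the prenex string. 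I would first prove by downward induction on $i$ that whenever $u_1,\ldots,u_i$ and $v_1,\ldots,v_r$ are all standard,
$$\mathsf{Q}^{\st} u_{i+1}\ldots\mathsf{Q}^{\st} u_s\,\psi(u_1,\ldots,u_s,v_1,\ldots,v_r) \;\eqi\; \phi_i(u_1,\ldots,u_i,v_1,\ldots,v_r).$$
The base case $i=s$ is immediate. For the inductive step from $i+1$ to $i$, the inductive hypothesis collapses the inner $s-i-1$ standard quantifiers to $\phi_{i+1}$; Transfer applied to $\phi_{i+1}$, whose parameters $u_1,\ldots,u_i,v_1,\ldots,v_r$ together with the standard parameters of $\psi$ are all standard by hypothesis, then replaces the remaining $\mathsf{Q}^{\st} u_{i+1}$ by $\mathsf{Q}_{i+1} u_{i+1}$ to yield $\phi_i$.

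Taking $i=0$ gives $\Phi(v_1,\ldots,v_r) \eqi \phi_0(v_1,\ldots,v_r)$ for all standard $v_1,\ldots,v_r$, with $\phi_0$ an $\in$-formula whose parameters are all standard. Define $P=\{\la v_1,\ldots,v_r\ra \in S : \phi_0(v_1,\ldots,v_r)\}$ by Separation in $\ZF$. Since $P$ is uniquely determined by an $\in$-formula whose parameters ($S$ and those of $\phi_0$) are all standard, applying Transfer to the existence statement for such a set --- which is true by Separation --- produces a standard witness, which by uniqueness must equal $P$; hence $P$ is standard. For standard tuples $\la v_1,\ldots,v_r\ra$ the defining property of $P$ then reads exactly as the lemma demands. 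The chief obstacle is the inductive bookkeeping: each Transfer step requires its target to be an $\in$-formula whose every parameter is standard, which is precisely what the $\st$-prenex shape (all standard quantifiers outside all internal quantifiers) together with the hypothesis on $\Phi$'s parameters guarantees.
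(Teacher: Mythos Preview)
Your proof is correct and follows essentially the same approach as the paper: remove the $\st$'s from the prenex string to obtain an $\in$-formula $\phi$, use Transfer (with the standardness of all parameters) to see that $\Phi\eqi\phi$ on standard inputs, and then define $P$ by Separation and observe it is standard. The paper states the Transfer step and the standardness of $P$ in a single line each, whereas you spell out the inner-to-outer induction and the uniqueness argument explicitly; these are just elaborations of the same argument.
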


\begin{proof}
Let $\Phi (v_1\ldots, v_r)$ be  $\mathsf{Q}_1^{\st} u_1\ldots  \mathsf{Q}_s^{\st} u_s\, \psi(u_1,\ldots ,u_s,v_1, \ldots,v_r)$
and  let $\phi (v_1\ldots, v_r)$ be
 $\mathsf{Q}_1  u_1\ldots  \mathsf{Q}_s u_s\, \psi(u_1,\ldots ,u_s, v_1, \ldots,v_r)$.
Since $\Phi$ has standard parameters, 
$ \Phi (v_1\ldots,v_r) \eqi \phi (v_1\ldots,v_r)$ holds for all standard $ v_1\ldots,v_r$ 
by the Transfer principle.

The  set $P = \{ \la v_1,\ldots,v_r \ra  \in S \, \mid \, \phi( v_1,\ldots,v_r)\}$ exists by the Separation Principle of $\ZF$, it is standard, and has the required property.
\end{proof}

 \begin{remark} \label{forallz}
This result has twofold importance:
\begin{enumerate}
\item
The meaning of every predicate that for standard inputs is defined by an $\st$-prenex  formula  $\mathsf{Q}_1^{\st} u_1\ldots  \mathsf{Q}_s^{\st} u_s\, \psi$   with standard parameters is automatically extended to all inputs, where it is given by the  $\in$-formula $\mathsf{Q}_1 u_1\ldots  \mathsf{Q}_s u_s\, \psi$.
\item
Standardization holds for all $\in$-formulas  with additional predicate symbols, as long as all these  additional predicates are defined by $\st$-prenex formulas with standard parameters.
\end{enumerate}
\end{remark}


\section{Two examples}
\label{examples}

Formulas that occur in practice are usually not in the $\st$-prenex form, but they can often be converted to it using Countable Idealization.

\begin{definition}\label{defriemann}
[\emph{Integral of continuous functions}]

We fix a positive infinitesimal $h$ and the corresponding ``hyperfinite  line'' 
$\{ x_i \,\mid\, i \in \bbZ\}$ where $x_i = i\cdot h$. Let $f$ be a standard real-valued function continuous on the standard interval $[a, b ]$. Let $i_a, i_b$ be such that $ i_a \cdot h- h < a \le  i_a \cdot h$ and 
$i_b \cdot h < b \le i_b\cdot h + h$. We define
\begin{equation}
\int_a^b f(x)\, dx = \shh \left( \Sigma_{i = i_a}^{ i_{b} } f(x_i)\cdot h  \right).
\end{equation} 
It is easy to show that the value of the integral does not depend on the choice of $h$. 
\end{definition}

\begin{lemma}\label{riemann}
There is an $\st_{\bbN}$-prenex  formula $\Phi(v_1, v_2,v_3,v_4)$ such that 
$\int_a^b f(x)\, dx =r  \eqi \Phi(f,a,b,r)$ holds
for all standard $f,a,b,r$.
\end{lemma}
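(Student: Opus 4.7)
The plan is to express $\int_a^b f(x)\, dx = r$ as an internal statement about a positive infinitesimal witness, and then apply Countable Idealization (Lemma~\ref{countideal}) to pull the internal existential past the standard universal quantifiers that arise when ``positive infinitesimal'' and ``$\approx$'' are unwound.

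First, I observe that $\sigma(f, a, b, h) := \Sigma_{i = i_a}^{i_b} f(i \cdot h) \cdot h$, with $i_a, i_b$ determined from $a, b, h$ by the inequalities in Definition~\ref{defriemann}, is given by an $\in$-formula in $f, a, b, h$: the sum is finite, and the constraints defining $i_a, i_b$ are $\in$-formulas. Unwinding ``$h$ is a positive infinitesimal'' as $h > 0 \wedge \forall^{\st}_\bbN n\, (n \ge 1 \imp h \le 1/n)$ and ``$\sigma(f, a, b, h) \approx r$'' as $\forall^{\st}_\bbN n\, (n \ge 1 \imp |\sigma(f,a,b,h) - r| \le 1/n)$, together with the observation in Definition~\ref{defriemann} that the value of the integral is independent of the choice of $h$, I obtain that for standard $f, a, b, r$,
\[
\int_a^b f(x)\, dx = r \ \eqi\ \exists h\, \bigl[h > 0 \wedge \forall^{\st}_\bbN n\, (n \ge 1 \imp h \le 1/n \wedge |\sigma(f,a,b,h) - r| \le 1/n)\bigr].
\]

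The quantifier alternation $\exists h\, \forall^{\st}_\bbN n$ in the displayed formula is not yet $\st_\bbN$-prenex. I apply Countable Idealization with
\[
\phi(n, h) := \bigl(n \ge 1 \imp h > 0 \wedge h \le 1/n \wedge |\sigma(f,a,b,h) - r| \le 1/n\bigr),
\]
so that $\exists h\, \forall^{\st}_\bbN n\, \phi(n, h) \eqi \forall^{\st}_\bbN n\, \exists h\, \forall m \le n\, \phi(m, h)$; the bounded $\forall m \le n$ can be absorbed because $\phi(n, h)$ implies $\phi(m, h)$ for all $1 \le m \le n$. The resulting equivalent
\[
\Phi(f, a, b, r) := \forall^{\st}_\bbN n\, \exists h\, \bigl[n \ge 1 \imp (h > 0 \wedge h \le 1/n \wedge |\sigma(f,a,b,h) - r| \le 1/n)\bigr]
\]
is $\st_\bbN$-prenex, since its matrix is an $\in$-formula.

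Finally, I will verify the equivalence for standard $f, a, b, r$. The direction from $\int_a^b f(x)\, dx = r$ to $\Phi$ is direct: the fixed positive infinitesimal $h$ of Definition~\ref{defriemann} witnesses the existential for every standard $n$. The converse uses the already-established equivalence, which provides a single $h$ with $h > 0$ and $\sigma(f, a, b, h) \approx r$, so that $h$ is a positive infinitesimal and $r = \shh(\sigma(f, a, b, h)) = \int_a^b f(x)\, dx$ since $r$ is standard. The main obstacle is purely syntactic bookkeeping: confirming that $\Phi$ genuinely has the $\st_\bbN$-prenex shape (its matrix contains only bounded and unbounded $\in$-quantifiers) and that the bounded $\forall m \le n$ in Countable Idealization is legitimately absorbed in our application.
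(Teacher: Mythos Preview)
Your proof is correct, and it takes a slightly different route from the paper's. The paper starts from the \emph{universal} characterization
\[
\forall h\,\bigl[\,h\text{ infinitesimal}\;\imp\;\textstyle\sum_{i=i_a}^{i_b} f(x_i)\,h \approx r\,\bigr],
\]
rewrites the implication as a disjunction $|h|\ge 1/n \vee |\sum - r|<1/m$, and applies the \emph{dual} form of Countable Idealization to push $\forall h$ inside, ending with a formula of shape $\forall^{\st}_{\bbN} m\,\exists^{\st}_{\bbN} n\,\forall h\,(\ldots)$. You instead exploit the independence of the integral from the choice of $h$ (stated in Definition~\ref{defriemann}) to pass to the \emph{existential} characterization $\exists h\,[h>0\wedge h\text{ infinitesimal}\wedge \sigma\approx r]$, and then apply Countable Idealization in its direct form to swap $\exists h\,\forall^{\st}_{\bbN} n$ to $\forall^{\st}_{\bbN} n\,\exists h$, obtaining the simpler shape $\forall^{\st}_{\bbN} n\,\exists h\,(\ldots)$ with only one standard quantifier. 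Your absorption of the bounded $\forall m\le n$ via the monotonicity of $\phi(n,h)$ in $n$ is legitimate. Both routes are valid; yours yields a syntactically leaner $\st_{\bbN}$-prenex formula, while the paper's avoids explicitly invoking the $h$-independence and stays closer to the ``for all infinitesimal $h$'' reading of the definition.
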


\begin{proof}
For standard $f, a,b, r$ we have $\int_a^b f(x)\, dx  = r $ \; iff 
$$
\forall h \,\left[  \forall^{\st}_{\bbN} n \;(|h| < \tfrac{1}{n}) \imp\,\forall^{\st}_{\bbN} m \left( |\Sigma_{i = i_a}^{  i_{b}  } f(x_i)\cdot h  - r | < \tfrac{1}{m}\right)\right] $$
 (it is understood  that $h$, $n$, $m$ are not $0$).
This expression can be rewritten as
$$ \forall h  \; \forall^{\st}_{\bbN} m\; \exists^{\st}_{\bbN} n\,
\left[  |h| \ge \tfrac{1}{n}  \,\vee\,   |\Sigma_{i = i_a}^{  i_{b}  } f(x_i)\cdot h  - r | < \tfrac{1}{m} \right].$$
We swap the outmost universal quantifiers and apply the dual version of Countable Idealization 
 (Lemma~\ref{countideal}) to get
$$ \forall^{\st}_{\bbN} m\; \exists^{\st}_{\bbN} n\,\forall h  \; \exists k \le n\; 
\left[  |h| \ge \tfrac{1}{k}  \,\vee\,   |\Sigma_{i = i_a}^{  i_{b}  } f(x_i)\cdot h  - r | < \tfrac{1}{m} \right],$$
which is an $\st_{\bbN}$-prenex formula, clearly equivalent to 
$$ \forall^{\st}_{\bbN} m\; \exists^{\st}_{\bbN} n\,\forall h  \;  
\left[  |h| \ge \tfrac{1}{n}  \,\vee\,   |\Sigma_{i = i_a}^{  i_{b}  } f(x_i)\cdot h  - r | < \tfrac{1}{m} \right].$$
\end{proof}

One can now use Standardization for $\st$-prenex formulas with standard parameters to conclude that, for example, for every standard $f$, $a$ there exists a standard function $F$ such that $F(z) =  \int_a^z f(x)\, dx $ for all  standard $z \in [a,b]$. By Remark~\ref{forallz} (1), the last equation holds for all $z \in [a,b]$.
Of course, the usual arguments show that the above definition of the integral agrees with the traditional $\epsilon$-$\delta$ one for all standard $f, a, b, r$.

The following observation is crucial for the proof of Proposition~\ref{propW}.

\begin{lemma}\label{defofZ}
Let $w$ be a  function, $\dom w = D_w \subseteq \bbR$ and  $\ran w  \subseteq \bbR$.   Then the formula
$$\Psi(x,y): \quad \exists \al \in D_w \;[x \approx \al \,\wedge\, (y \approx w(\al) \,\vee\, y \ge w(\al))]$$ is equivalent to an 
$\st_{\bbN}$-prenex formula (with the parameter $w$).
\end{lemma}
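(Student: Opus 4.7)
The plan is to unfold each occurrence of $\approx$ using its defining $\forall^{\st}_{\bbN}$ quantifier, move those standard quantifiers past the internal connectives to the outside of the formula, merge them into a single $\forall^{\st}_{\bbN} N$, and finally swap past $\exists \al$ using Countable Idealization. Expanding the approximations gives $x \approx \al \eqi \forall^{\st}_{\bbN} n > 0\;|x-\al| \le 1/n$ and $y \approx w(\al) \eqi \forall^{\st}_{\bbN} m > 0\;|y-w(\al)| \le 1/m$, so the bracketed body of $\Psi$ becomes a conjunction of two such $\forall^{\st}_{\bbN}$-formulas, with the second disjoined against the $\in$-formula $y \ge w(\al)$.

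Since $y \ge w(\al)$ does not depend on $m$, the identity $(\forall m\, P(m)) \vee Q \eqi \forall m\,(P(m) \vee Q)$ lets me slide $\forall^{\st}_{\bbN} m$ across the disjunction. The two resulting standard prefixes then fuse into a single $\forall^{\st}_{\bbN} N > 0$, using closure of the standard naturals under maximum (Lemma~\ref{downclosed} plus Transfer), producing
$$\Psi(x,y)\;\eqi\;\exists \al\; \forall^{\st}_{\bbN} N > 0\;\phi(N,\al),$$
where $\phi(N,\al)$ is the $\in$-formula
$$\al \in D_w \,\wedge\, |x-\al| \le 1/N \,\wedge\, (|y-w(\al)| \le 1/N \,\vee\, y \ge w(\al)).$$
Note that $\phi$ is downward monotone in $N$: if $M \le N$ with $M > 0$ then $1/M \ge 1/N$, so $\phi(N,\al) \imp \phi(M,\al)$.

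Finally I invoke Countable Idealization (Lemma~\ref{countideal}) with arbitrary parameters $x, y, w$, which swaps $\exists \al$ past $\forall^{\st}_{\bbN} N$:
$$\exists \al\;\forall^{\st}_{\bbN} N > 0\;\phi(N,\al)\;\eqi\;\forall^{\st}_{\bbN} N > 0\;\exists \al\;\forall M \le N\;\phi(M,\al).$$
The downward monotonicity of $\phi$ collapses the bounded $\forall M \le N$ to $\phi(N,\al)$ itself, yielding the $\st_{\bbN}$-prenex form $\forall^{\st}_{\bbN} N > 0\;\exists \al\;\phi(N,\al)$, whose only parameters (beyond the free variables $x, y$) are those of $\phi$, i.e., $w$. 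The main substantive step is the interchange of $\forall^{\st}_{\bbN} m$ with the $\in$-disjunct $y \ge w(\al)$; once that is handled, the remainder is a routine reshuffling of quantifiers by Countable Idealization, entirely parallel to the argument used for the Riemann integral in Lemma~\ref{riemann}.
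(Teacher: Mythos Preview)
Your proof is correct and follows essentially the same route as the paper's: unfold $\approx$, pull the $\forall^{\st}_{\bbN}$ quantifiers across the disjunction and conjunction, merge them into one, then apply Countable Idealization and collapse the bounded quantifier by monotonicity. The only cosmetic differences are your use of $\le 1/N$ with $N>0$ in place of the paper's $<1/(n+1)$, and your explicit mention of downward monotonicity where the paper leaves this implicit.
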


\begin{proof}
The formula $\Psi(x,y)$ can be written as 
$$\exists \al \in D_w\;[\forall^{\st}_{\bbN} i\, (|x - \al| < \tfrac{1}{i+1})\,\wedge\, (\forall^{\st}_{\bbN} j\,
(|y - w(\al)| <  \tfrac{1}{j+1}) \,\vee\, y \ge w(\al))],$$
which is equivalent to
$$\exists \al \in D_w\;\forall^{\st}_{\bbN} i\; \forall^{\st}_{\bbN} j\, [ (|x - \al| <  \tfrac{1}{i+1})\,\wedge\, 
(|y - w(\al)| <  \tfrac{1}{j+1} \,\vee\, y \ge w(\al))].$$
This is equivalent to 
$$\exists \al \in D_w\;\forall^{\st}_{\bbN} n\, \ [ (|x - \al| <  \tfrac{1}{n+1})\,\wedge\, 
(|y - w(\al)| <  \tfrac{1}{n+1} \,\vee\, y \ge w(\al))]$$
(let $n = \min{\{i,j}\}$), and finally (Countable Idealization,  Lemma~\ref{countideal}) to the $\st_{\bbN}$-prenex formula
$$\forall^{\st}_{\bbN} n\, \exists \al \in D_w\; \forall m \le n\ [ (|x - \al| <  \tfrac{1}{m+1})\,\wedge\, 
(|y - w(\al)| <  \tfrac{1}{m+1} \,\vee\, y \ge w(\al))].$$
The last formula of course simplifies to 
$$\forall^{\st}_{\bbN} n\, \exists \al \in D_w\; [ (|x - \al| <  \tfrac{1}{n+1})\,\wedge\, 
(|y - w(\al)| <  \tfrac{1}{n+1} \,\vee\, y \ge w(\al))]. $$
\end{proof}

\begin{definition}
Let $w$ be a  function,   $\dom w = D_w \subseteq I$ where $I \subseteq \bbR$ is a standard interval, and $\ran w  \subseteq \bbR$.
\begin{itemize}
\item
The function  $w$ is \emph{densely defined on}  $I$ if for every standard $x \in I$ there is $\al \in D_w$ such that $\al \approx x$.
\item
 The function  $w$ is (uniformly)
S-\emph{continuous} if for $\al, \be \in D_w$, $\al \approx \be $ implies $w(\al) \approx w(\be)$.
\end{itemize}
\end{definition}

\begin{lemma}
A function $w$ is S-continuous iff for every standard $\epsilon > 0$
there is a standard $\delta > 0$ such that for $\al, \be \in D_w$, $|\al - \be| < \delta$ implies $|w(\al) -w(\be)| < \epsilon$.
\end{lemma}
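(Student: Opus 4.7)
The plan is to prove both implications separately; only the forward direction ($\Rightarrow$) requires nonstandard machinery beyond unwinding definitions. For the reverse direction, I will fix $\alpha, \beta \in D_w$ with $\alpha \approx \beta$ and an arbitrary standard $\epsilon > 0$, extract a standard $\delta > 0$ from the $\epsilon$-$\delta$ hypothesis, and observe that $\alpha \approx \beta$ together with the standardness and positivity of $\delta$ forces $|\alpha - \beta| < \delta$, so that $|w(\alpha) - w(\beta)| < \epsilon$. Letting $\epsilon$ range over standard positive reals then yields $w(\alpha) \approx w(\beta)$, i.e., S-continuity.

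The substantive direction is $\Rightarrow$, where I plan to invoke the dual form of Countable Idealization (Lemma~\ref{countideal}); that lemma permits arbitrary parameters and so accommodates the potentially nonstandard function $w$. Fixing a standard $\epsilon > 0$, I apply the dual idealization to the formula
\[
\phi(n, (\alpha, \beta)): \quad n > 0 \;\wedge\; \Bigl( |\alpha - \beta| \ge \tfrac{1}{n} \;\vee\; |w(\alpha) - w(\beta)| < \epsilon \Bigr).
\]
The key preliminary step is to verify that S-continuity yields $\forall (\alpha, \beta) \in D_w \times D_w \; \exists^{\st}_{\bbN} n \; \phi(n, (\alpha, \beta))$: if $\alpha \not\approx \beta$ then some standard $n > 0$ realizes $|\alpha - \beta| \ge 1/n$; otherwise $\alpha \approx \beta$ and S-continuity gives $w(\alpha) \approx w(\beta)$, so any standard $n > 0$ with $1/n < \epsilon$ works (such an $n$ exists because $\epsilon$ is a standard positive real).

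Dual Countable Idealization then produces a standard $N \in \bbN$ such that for every $(\alpha, \beta) \in D_w \times D_w$ some $m \le N$ satisfies $\phi(m, (\alpha, \beta))$. I will set $\delta = 1/N$: for any $(\alpha, \beta) \in D_w \times D_w$ with $|\alpha - \beta| < 1/N$, and any positive $m \le N$ witnessing the bounded existential, the monotonicity $1/m \ge 1/N > |\alpha - \beta|$ rules out the first disjunct of $\phi(m, (\alpha, \beta))$, so the second disjunct must hold, giving $|w(\alpha) - w(\beta)| < \epsilon$ as required. The one point requiring care is the bookkeeping that ensures the bounded existential $\exists m \le N$ produced by the dual form still yields a single uniform $\delta = 1/N$; this is exactly what the monotonicity of $1/m$ in $m$ provides, so no genuine obstacle remains.
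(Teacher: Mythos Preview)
Your argument is correct and is precisely the ``usual argument'' the paper alludes to (the paper's own proof is merely a pointer to \cite{HK3}); your use of the dual form of Countable Idealization to extract a uniform standard $\delta$ from the pointwise standard witnesses is exactly the $\SPOT$-appropriate version of the classical overspill/idealization step. One cosmetic remark: in the case $\alpha \approx \beta$, once S-continuity gives $w(\alpha)\approx w(\beta)$ you immediately have $|w(\alpha)-w(\beta)|<\epsilon$ for the fixed standard $\epsilon>0$, so \emph{any} standard $n>0$ witnesses $\phi(n,(\alpha,\beta))$; the side condition $1/n<\epsilon$ is unnecessary (though harmless).
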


\begin{proof}
The usual arguments work in $\SPOT$; see e.g.~\cite{HK3}.
\end{proof}

The next proposition follows immediately from the Standardization
principle of $\IST$ or $\BST$, but to prove it in $\SPOT$ we need to
consider an approximation to the set $W$ on the rationals, to which we
can apply Countable Standardization for $\st_{\bbN}$-prenex formulas.

\begin{proposition}\label{propW}
If $w$ is S-continuous and densely defined  on $I$, then there is a standard function $W$ such that, for all standard $x, y \in \bbR$,
$\la x, y \ra \in W$ if and only if $x \approx \al$ and $y \approx w(\al)$ for some $\al \in D_w$.
\end{proposition}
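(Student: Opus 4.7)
The plan is to approximate $W$ on the standard countable set $\bbQ^2$ via Countable Standardization for $\st_{\bbN}$-prenex formulas, read off its shadows at standard rationals, and extend to $I$ by continuity.

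First, I would apply Countable Standardization for $\st_{\bbN}$-prenex formulas, with $\bbQ^2$ as the standard countable set, to the formula $\Psi(x,y)$ of Lemma~\ref{defofZ}. Since that version of Standardization tolerates arbitrary parameters, the (possibly nonstandard) parameter $w$ causes no difficulty. The outcome is a standard set $Z \sus \bbQ^2$ such that, for all standard $p, q \in \bbQ$,
\[
\la p, q \ra \in Z \eqi \exists \al \in D_w\,[\al \approx p \,\wedge\, (q \approx w(\al) \,\vee\, q \ge w(\al))].
\]

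Second, fix standard $p \in \bbQ \cap I$. Density of $D_w$ supplies $\al \in D_w$ with $\al \approx p$; the $\epsilon$-$\delta$ form of S-continuity makes $w(\al)$ limited, and makes $s_p := \shh(w(\al))$ independent of the choice of $\al$. A case split on $q > s_p$, $q = s_p$, $q < s_p$ (all standard comparisons) shows $\la p, q \ra \in Z$ iff $q \ge s_p$, so
\[
s_p \;=\; \inf\{q \in \bbQ \,\mid\, \la p, q \ra \in Z\}
\]
exhibits $p \mapsto s_p$ as a standard function on $\bbQ \cap I$ obtained from the standard set $Z$. The $\epsilon$-$\delta$ characterization of S-continuity then translates to ordinary uniform continuity of $s$ on $\bbQ \cap I$; by Transfer and standard density of $\bbQ$ in $I$, $s$ extends uniquely to a standard continuous function $W$ on $I$. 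For standard $x \in I$ and any $\al \in D_w$ with $\al \approx x$, choosing standard rationals $p \to x$ and combining S-continuity of $w$ with standard continuity of $W$ gives $W(x) = \shh(w(\al))$, whence the required equivalence for standard $x, y$.

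The main obstacle is the opening step. Because $w$ is typically nonstandard, Lemma~\ref{stinternal} cannot be invoked to standardize a subset of $\bbR^2$ directly. The point of Lemma~\ref{defofZ} is precisely to package the defining relation into $\st_{\bbN}$-prenex form with the nonstandard parameter $w$ absorbed inside, so that the parameter-tolerant Proposition (Countable Standardization for $\st_{\bbN}$-prenex formulas) may be applied on the standard countable test space $\bbQ^2$; the remainder is routine extension by continuity.
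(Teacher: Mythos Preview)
Your approach mirrors the paper's: standardize the auxiliary relation on the rationals via Countable Standardization for $\st_{\bbN}$-prenex formulas (using Lemma~\ref{defofZ} to put the defining condition, with its nonstandard parameter $w$, into the required form), recover the shadow at each standard rational $p$ as $\inf\{q : \la p, q \ra \in Z\}$, and then pass to all of $I$ by a continuity argument. The paper takes $W$ to be the closure of the graph of this partial rational function; you instead extend by uniform continuity to a total continuous function on $I$.

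There is one genuine gap. You assert that ``the $\epsilon$-$\delta$ form of S-continuity makes $w(\al)$ limited,'' but S-continuity only controls differences $|w(\al) - w(\be)|$ for nearby inputs, not the size of $w$ itself. The constant function $w \equiv N$ with $N$ unlimited is S-continuous and densely defined on any $I$, yet takes no limited values; the correct $W$ for such $w$ is the empty function. The paper accommodates this by allowing $\inf Z_q$ to be undefined (when $Z_q = \emptyset$ or $Z_q = \bbQ$) and taking the closure of the graph of the resulting \emph{partial} function $W_0$. Your uniform-extension step presupposes that $s_p$ is defined for every standard $p \in \bbQ \cap I$, which need not hold under the stated hypotheses. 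The repair is straightforward: work with the partial function and take the closure of its graph, as the paper does; the converse implication then requires one appeal to Countable Idealization to produce a single $\al$ witnessing both $x \approx \al$ and $y \approx w(\al)$.
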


 The proof of Proposition~\ref{propW} appears below, following the proof of Lemma~\ref{loccont}.

\begin{definition}\label{defZ}
 The existence of the standard set 
 \[
Z =\, {}^{\st} \{ \langle q, r \rangle \in (I \cap \bbQ) \times \bbQ \mid\, \exists \al \in D_w\, 
[q \approx \al \,\wedge\, (r \approx w(\al) \,\vee\, r \ge w(\al)] \}
\]
is justified in Lemma~\ref{defofZ}.

For $q \in I \cap \bbQ$ let $Z_q = \{ r \in \bbQ \,\mid\, \la q, r \ra \in Z\}$ and $W_0(q) = \inf Z_q$, if it exists
(it can happen that $Z_q = \emptyset $ or $Z_q = \bbQ$, in which cases $W_0(q)$ is undefined).
Finally, let $W$ be the closure of (the graph of) $W_0$.
We show below that the standard set $W$ has the property from Proposition~\ref{propW}.
\end{definition}

\begin{lemma}\label{loccont}
If $q \in I \cap \bbQ$ is standard, then $q \in \dom W_0$ if and only if there exists $\al \in D_w$ such that $\al \approx q$ and $w(\al)$ is limited.  If this is the case, then  $W_0(q) = \shh(w(\al))$.
\end{lemma}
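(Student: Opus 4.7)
The plan is to describe $Z_q$ explicitly using S-continuity and then read off the infimum. The only membership information directly available from the definition of $Z$ concerns standard pairs $\la q, r \ra$, for which $\la q, r \ra \in Z$ is equivalent to $\Psi(q, r)$ (with $\Psi$ as in Lemma~\ref{defofZ}). However, for standard $q$ the section $Z_q$ is itself a standard subset of $\bbQ$, and two standard sets agreeing on standard elements coincide by Transfer, so it is enough to determine which standard rationals lie in $Z_q$.

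For the backward implication, assume $\al \in D_w$ with $\al \approx q$ and $w(\al)$ limited, and set $t := \shh(w(\al))$. By S-continuity, every $\al' \in D_w$ with $\al' \approx q$ satisfies $w(\al') \approx t$. For a standard rational $r$, I test $\Psi(q, r)$ with the witness $\al$: if $r > t$ then $r - w(\al) \approx r - t > 0$, so $r \ge w(\al)$; if $r = t$ then $r \approx w(\al)$; in either case $\Psi(q, r)$ holds and $r \in Z_q$. If $r < t$, then for every $\al' \in D_w$ with $\al' \approx q$ one has $w(\al') > r$ and $w(\al') \not\approx r$, so $\Psi(q, r)$ fails and $r \notin Z_q$. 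Thus $Z_q$ and the standard set $\{r \in \bbQ \mid r \ge t\}$ share the same standard elements, hence coincide, whence $q \in \dom W_0$ and $W_0(q) = \inf Z_q = t = \shh(w(\al))$.

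For the forward implication, assume $q \in \dom W_0$, set $s := W_0(q) = \inf Z_q$ (which is standard), and use dense-definedness of $w$ on $I$ to pick some $\al \in D_w$ with $\al \approx q$. I claim $w(\al)$ is limited. Since the standard set $Z_q$ is nonempty, Transfer furnishes a standard $r_0 \in Z_q$; unpacking $\Psi(q, r_0)$ yields $\al_0 \in D_w$ with $\al_0 \approx q$ and $w(\al_0) \le r_0 + 1$, so by S-continuity $w(\al) \le r_0 + 2$. For the lower bound, any standard rational $r_1 < s$ fails to be in $Z_q$, so $\Psi(q, r_1)$ fails; testing with $\al$ itself then gives $r_1 < w(\al)$. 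Hence $w(\al)$ is limited, and the backward direction (already established) delivers $W_0(q) = \shh(w(\al))$. The main obstacle throughout is precisely this translation between the standardization-based definition of $Z$, which a priori pins down membership only for standard tuples, and the full description of $Z_q$ needed to evaluate an infimum.
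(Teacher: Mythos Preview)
Your proof is correct and follows essentially the same approach as the paper: both use S-continuity to show that the standard elements of $Z_q$ are exactly the standard rationals $r \ge \shh(w(\al))$ (when $w(\al)$ is limited), and then read off the infimum, with Transfer justifying why standard elements determine the standard set $Z_q$. One minor redundancy: in the forward direction your appeal to dense-definedness is unnecessary, since the witness $\al_0 \approx q$ already comes from unpacking $\Psi(q,r_0)$ for a standard $r_0 \in Z_q$, which you do in the very next sentence; the paper likewise assumes some $\al \approx q$ exists and simply observes that if $w(\al)$ is unlimited then $Z_q = \emptyset$ or $Z_q = \bbQ$.
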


\begin{proof}
 If $\al, \be \in D_w$, $q \approx \al$ and $q \approx \be$, then $w(\al) \approx w(\be)$,
 so we have $Z_q = {}^{\st}\{ r \in \bbQ \,\mid\,  r \approx w(\al) \,\vee\, r \ge w(\al )\}$, independently of the choice of $\al$.
If $w(\al)$ is limited, then $  \inf Z_q = \shh(w(\al))$.
If $w(\al)$ is unlimited, then $Z_q = \emptyset$ or $Z_q = \bbQ$, so $W_0(q)$ is undefined.
\end{proof}

\begin{proof}[Proof of Proposition~\ref{propW}]
Assume that $x \approx \al $ and $ y \approx w(\al)$ for $\al \in D_w$.
Given any standard $\epsilon > 0$, take a standard $\delta>0 $
witnessing S-continuity of $w$, a standard $q \in \bbQ \cap I$ such
that $|x - q| < \min\{\delta, \epsilon\}$ and some $\be \approx q$,
$\be \in D_ w$.  Then $|\al - \be| < \delta$, and hence $|w(\al) -
w(\be)| < \epsilon$.  It follows that $w(\be)$ is limited.  By
Lemma~\ref{loccont}, $w(\be) \approx W_0(q)$, so $|x-q|<\epsilon$ and
$|y - W_0(q) |< \epsilon$.  This shows that $\la x, y \ra \in W $.

Conversely, if $\la x, y \ra \in  W $, then for every standard $\epsilon > 0$ there is 
$q\in  \dom W_0$ such that $|x - q | < \epsilon $ and $|y - W_0(q)| < \epsilon$.
Let $\al \in D_w $, $\al \approx q$; then $w(\al) \approx W_0(q)$, $|x -\al|< \epsilon$ and 
$|y - w(\al)|< \epsilon$.
By Countable Idealization  (Lemma~\ref{countideal}) there is $\al \in D_w$ such that for all standard $\epsilon > 0$ we have 
$|x -\al|< \epsilon$ and  $|y- w(\al)|< \epsilon$.
Then $x \approx \al$ and $y \approx w(\al)$.
\end{proof}


\section{Peano's  Existence Theorem in {\bf SPOT}}
\label{peano}

\begin{theorem}[Global Peano's  Theorem]\label{Theorem}

Let $F: [0, \infty) \times \mathbb{R} \to \mathbb{R}$ be a continuous
  function.  There is an interval $[0, a) $ with $0 < a \le \infty$
    and a function $y: [0,a) \to \mathbb{R}$ such that
\begin{equation}
\tag{$\ast$}
\label{star}
\qquad y(0)= 0,\quad
\quad y'(x) =F(x,y(x)) \quad
\end{equation}
holds for all $ x \in [0, a)$, and if $a \in \mathbb{R}$ then
  $\lim_{x\imp a^-} y(x) = \pm \infty$.
\end{theorem}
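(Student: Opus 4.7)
The plan is a nonstandard Euler-polygon argument. By axiom $\N$ I fix an unlimited $\nu \in \bbN$ and set $h = 1/\nu$, a positive infinitesimal. On the hyperfinite grid $D_w = \{ih : i \in \bbN\}$ I define the internal Euler polygon $w : D_w \to \bbR$ by the plain $\ZF$ recursion $w(0) = 0$ and $w((i+1)h) = w(ih) + h \cdot F(ih, w(ih))$. A standard $t \ge 0$ is \emph{tame} if some standard $M$ bounds $|w(\al)|$ on all of $D_w \cap [0,t]$. Continuity of $F$ near $(0,0)$ produces a standard neighborhood on which $|F|$ is bounded, which makes small standard $t$ tame; so I set $a \in (0, \infty]$ to be the supremum of the tame $t$, with the existence of the standard set of tame rationals justified via Countable Standardization for $\st_{\bbN}$-prenex formulas applied to the formula ``$\exists^{\st}_{\bbN} n\, \forall \al \in D_w \cap [0,t]\ |w(\al)| < n$''.

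For each standard $t \in [0, a)$, with witness $M$ and standard $K = \sup_{[0,t]\times[-M,M]}|F|$, the polygon satisfies $|w(\al_1) - w(\al_2)| \le K|\al_1 - \al_2| + hK$ on $D_w \cap [0,t]$, so $w$ is S-continuous and densely defined there. Proposition~\ref{propW} yields a standard $y_t : [0,t] \to \bbR$ with $y_t(x) \approx w(\al)$ whenever $x$ is standard in $[0,t]$ and $\al \in D_w$ satisfies $\al \approx x$. Pointwise compatibility ($y_{t_1} = y_{t_2}|_{[0,t_1]}$ whenever $t_1 < t_2 < a$) together with Lemma~\ref{stinternal} assembles the $y_t$ into a standard $y : [0, a) \to \bbR$. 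For the ODE at a standard $x_0 \in [0, a)$ and a small standard $\Delta x > 0$ with $x_0 + \Delta x < a$, I telescope the Euler recursion to get
\[
y(x_0 + \Delta x) - y(x_0) \approx \sum_{x_0 < ih \le x_0 + \Delta x} h\, F(ih, w(ih)),
\]
and S-continuity of $w$ together with continuity of $F$ makes each summand $\approx F(x_0, y(x_0))$; so the sum is $\approx \Delta x \cdot F(x_0, y(x_0))$. Dividing by $\Delta x$ and sending it to $0$ through standard values gives $y'(x_0) = F(x_0, y(x_0))$, with the symmetric computation from the left. The initial condition is automatic from $w(0) = 0$.

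For the blow-up at $a$ when $a < \infty$, I first show $|y(x)| \to \infty$ as $x \to a^-$. Suppose not: some standard $N$ and standard $x$ arbitrarily close to $a$ satisfy $|y(x)| \le N$, so by S-continuity there is a grid point $\al_0 \in D_w$ close to $a$ with $|w(\al_0)| \le N+1$. On the internal set $S = \{\al \ge \al_0 : |w(\be)| \le 3N \text{ for all } \be \in D_w \cap [\al_0, \al]\}$, with standard $K = \sup_{[0, a+1]\times[-3N,3N]}|F|$ and infinitesimal one-step increment $hK$, a ``last-step'' argument applied to $\al^* = \max S$ forces $\al^* - \al_0 > (2N-1)/K - h$; hence $|w(\al)| \le 3N$ on $D_w \cap [\al_0, \al_0 + N/K]$. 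Since $\al_0 \approx a$, this interval reaches past some standard $s > a$, making $s$ tame (combine with the bound on $[0, x_0]$), contradicting $a < s$. Thus $|y(x)| \to \infty$; and if $y$ took values of both signs arbitrarily close to $a$, the intermediate value theorem would supply $x$ near $a$ with $y(x) = 0$, contradicting $|y| \to \infty$. So $y(x) \to +\infty$ or $y(x) \to -\infty$.

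The main obstacle will be this last step: the bootstrap constant ($3N$ instead of $2N$ or $N+5$) must be chosen with enough slack that the infinitesimal one-step push $hK$ cannot drive $|w|$ past the threshold, and the resulting extension of tameness must have standard length uniformly bounded below. A secondary concern, present throughout, is that every set-theoretic operation (forming the set of tame $t$, assembling the $y_t$ into $y$) has to stay within $\SPOT$; here I rely on Proposition~\ref{propW} and Countable Standardization for $\st_{\bbN}$-prenex formulas in place of the full Standardization of $\IST$/$\BST$, whose use would reintroduce nonprincipal ultrafilters and the strong form of $\AC$ that the paper is designed to avoid.
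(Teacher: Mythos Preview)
Your strategy---Euler polygon with infinitesimal step, then take the shadow via Proposition~\ref{propW}---is exactly the paper's, but three steps in your execution fail as written. First, to glue the local pieces $y_t$ into a single standard $y$ you invoke Lemma~\ref{stinternal}, but that lemma applies only to formulas with \emph{standard} parameters; any formula that recovers $y_t$ from $t$ must mention the nonstandard~$w$, so the lemma does not apply. The paper sidesteps the whole assembly problem by defining a single $Y$ on all of $[0,\infty)$ via~\eqref{starstar} (Countable Standardization for $\st_{\bbN}$-prenex formulas, unlike Lemma~\ref{stinternal}, \emph{does} allow arbitrary parameters) and then restricting to the connected component of $\dom Y$ containing~$0$; your tame-interval bookkeeping becomes unnecessary. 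Second, in your derivative computation, for a \emph{standard} $\Delta x>0$ the grid points $ih\in(x_0,x_0+\Delta x]$ are not all $\approx x_0$, so ``each summand $\approx F(x_0,y(x_0))$'' is simply false. What is true is that $F(ih,w(ih))\approx F(\shh(ih),y(\shh(ih)))$, whence the telescoped sum is $\approx\int_{x_0}^{x_0+\Delta x}F(t,y(t))\,dt$; the paper then finishes with the Fundamental Theorem of Calculus.

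Third, in the blow-up you slide from ``$\al_0$ close to~$a$'' to ``$\al_0\approx a$'', but $\al_0\approx x$ for a standard $x<a$, so $\al_0\not\approx a$. If you force $\al_0\approx a$ via Countable Idealization, there is no longer a fixed standard $x_0$ whose tameness bound covers $[0,\al_0]$, and tameness of each $t<a$ separately does not give a \emph{uniform} bound up to~$\al_0$. The clean fix within your framework is to reverse the order of choices: first fix the standard extension length $\delta=\min\{1,N/K\}$ and \emph{then} pick a standard $x_0\in(a-\delta,a)$ with $|y(x_0)|\le N$; the last-step argument carries $|w|\le 3N$ from $\al_0\approx x_0$ out past~$a$, and tameness of~$x_0$ supplies the bound on~$[0,x_0]$. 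The paper's version is shorter: having defined $Y$ globally with open domain, Countable Idealization produces $x_o\approx a$ with $y_o$ limited, so $\la a,\shh(y_o)\ra\in Y$ by~\eqref{starstar}, contradicting the choice of $[0,a)$ as a connected component of $\dom Y$.
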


Here and elsewhere, if $c \in \bbR$ is an endpoint of an interval $I =
\dom y$, $y'(c)$ is the appropriate one-sided derivative of $y$ at
$c$.  We call a solution of the initial value problem \eqref{star}
that cannot be continued to any interval $[0, a')$ with $a' > a$ a
  \emph{global} solution.

We generalize the familiar construction of Euler approximations with an infinitesimal step by allowing infinitesimal perturbations.
This is a variation on an idea in Birkeland and Normann~\cite{BN}
(the main difference being that we perturb the construction of the solution, while Birkeland and Normann  perturb the function $F$).

We will prove the theorem for standard $F$; the stated result follows by Transfer.
The construction proceeds as follows.

Let $N$ be a positive  unlimited integer and $h=1/N$.
We fix $x_0 \ge 0$, $x_0 \approx 0$, $y_0 \approx 0$,   and
let $x_k=x_0 + k\cdot h$ for
$k=0,\ldots,N^2$.

\begin{definition}
An \emph{infinitesimal perturbation} is a sequence $\bep = \la \ep_k
\,\mid\, k < N^2\ra $ such that each $\ep_k \approx 0$; we let $\ep =
\max\{|\ep_k |\,\mid\, k < N^2\}$.
\end{definition}

The concept is not needed for the proof of Theorem~\ref{Theorem},
where the simplest choice $\ep_k = 0 $ for all $k$ suffices, but it is
used for its generalization in Section~\ref{perturbs}.

We define $y_k$  recursively: 
\[
y_{k+1}=y_k + (F(x_k,y_k)+ \ep_k) \cdot h \quad \text{ for } k < N^2.\]
Observe that
\[
y_{\ell} = y_k + \sum_{i=k}^{\ell - 1} (F(x_i, y_i) + \ep_i)\cdot h,
\quad \text{for any $k < \ell \le N^2$.}
\]

We next define
\begin{equation}
\tag{$\ast\ast$}
\label{starstar}
  Y =\, {}^{\st} \{ \langle x, y \rangle \in [0, \infty) \times
    \mathbb{R}\,\mid\, x \approx x_k \,\wedge\, y \approx y_k \text{
      for some } k < N^2\}. 
\end{equation}
The existence of $Y$  in $\SPOT$ follows from Proposition~\ref{propW} (let $I = [0, \infty)$ and 
$w (x_k) = y_k$ for $0 \le k < N^2 $).
The strategy for the rest of the proof  is to show that $Y$ is a (graph of a) continuous function  defined on an open subset of $[0, \infty)$, and the restriction $y$ of $Y$ to the connected component of its domain containing $0$ has the required properties.

\begin{lemma}\label{techlemma}
Let $\la x, y\ra \in [0, \infty)  \times \mathbb{R} $ be standard and $x_p-h <x \le x_p$, $y \approx y_p$ for some $p < N^2$.
There exist standard $d, e , M> 0$ such that
$y_k \in [y-d, y+ d]$  for all $x_k\in [x, x+ e)$ and $|y_k - y_\ell| \le (M + \ep)\cdot |x_k - x_\ell|$ 
 for all $x_k, x_\ell \in [x, x+e)$.
In particular, if $x_k \approx x_\ell \approx x$ then $y_k \approx y_\ell$.\\
If $x >0$ then $[x,x + e)$ can be replaced by  $(x-e, x+e)$.
\end{lemma}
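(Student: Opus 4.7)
My plan is to use continuity of the standard function $F$ at the standard point $(x,y)$ to isolate a standard rectangle on which $|F|$ is bounded by a standard constant $M$, and then run a bootstrapped internal induction on the Euler recursion to show that the points $(x_k, y_k)$ stay inside this rectangle whenever $x_k$ lies in a sufficiently small standard neighborhood of $x$.

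By Transfer of the $\epsilon$-$\delta$ characterization of continuity at $(x,y)$, choose standard $d_0 > 0$ and standard $M > |F(x,y)|$ such that $|F(x',y')| \le M$ whenever $x' \ge 0$, $|x'-x| \le d_0$ and $|y'-y| \le d_0$. Set $d = d_0/2$ and pick a standard $e > 0$ with $e < d_0$ and $(M+1)\, e < d/2$; if $x > 0$, shrink $e$ further so that $e \le x/2$. The quantity $\ep = \max_{k < N^2}|\ep_k|$ equals $|\ep_{k^{*}}|$ for some internal argmax $k^{*}$, hence is infinitesimal, so $M + \ep < M + 1$.

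Let $K$ be the least index in $(p, N^2]$ with either $x_K \ge x + e$ or $|y_K - y| > d$; such a $K$ exists by internal well-ordering since $x_{N^2}$ is unlimited. For every $p \le i < K$, the minimality of $K$, together with the fact that $h$, $x_p - x$ and $y - y_p$ are infinitesimal, places $(x_i, y_i)$ inside the rectangle of continuity, so $|F(x_i, y_i)| \le M$. Summing the recursion,
\[
|y_K - y_p| \;\le\; \sum_{i = p}^{K - 1}(M + \ep)\, h \;=\; (M + \ep)(x_K - x_p) \;<\; (M + 1)\, e \;<\; d/2,
\]
which combined with $|y - y_p| \approx 0$ forces $|y_K - y| < d$. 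This rules out the second alternative and forces $x_K \ge x + e$, proving $y_k \in [y - d, y + d]$ for all $x_k \in [x, x + e)$; the same telescoping gives $|y_k - y_\ell| \le (M + \ep)\,|x_k - x_\ell|$ for any two such indices.

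For $x > 0$ apply the symmetric backward iteration $y_k = y_{k+1} - (F(x_k, y_k) + \ep_k)\, h$ from $y_p$, the choice $e \le x/2$ ensuring $x_k \ge 0$, to extend all bounds to $(x - e, x + e)$. The final assertion is then immediate: $x_k \approx x_\ell \approx x$ forces both indices into the window, so $|y_k - y_\ell| \le (M + \ep)\,|x_k - x_\ell| \approx 0$. The principal subtlety is the bootstrap — invoking $|F(x_i, y_i)| \le M$ along the sum requires the $y_i$'s to lie already in the good band, which is exactly what is being proved — and this is resolved cleanly by taking $K$ minimal, since then the cumulative Euler drift over a standard length at most $e$ is strictly less than $d/2$, contradicting any putative violation inside the window.
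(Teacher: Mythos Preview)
Your argument is correct and follows essentially the same approach as the paper: pick a standard rectangle around $(x,y)$ on which $|F|\le M$, choose $e$ so small that the cumulative Euler drift over length $e$ cannot escape the rectangle, and run the bootstrap. The only cosmetic difference is that the paper carries out the bootstrap by explicit forward induction on $k$ (proving $|y_k-y_p|\le(M+\ep)|x_k-x_p|$ for $x_k<x+e$), whereas you package the same idea as a first-exit-time argument via the minimal index $K$; the one implicit step in your write-up is that $(M+\ep)(x_K-x_p)<(M+1)e$ uses $x_{K-1}<x+e$ (minimality of $K$) and $x_p\ge x$ to get $x_K-x_p<e+h$, but this is harmless.
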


\begin{proof}
By continuity of $F$ at  $\langle x, y\rangle$ there exist standard $c, d  , M > 0$ such that $|F(t,s)| \le M$ holds for all $\langle t, s \rangle \in  [x, x +c] \times [y-d, y +d]$; if $x>0$,we can assume also $c \le x$.
Fix a standard $e$ such that  $0 < e < \min\{ c, \,d/(M+1)\}$.

We prove by induction on $k$  that $$k \ge p \,\wedge\, x_k < x + e \imp |y_k - y_p| \le (M + \ep) \cdot| x_k - x_p| .$$

The case $k = p$ is clear. If the claim is true for $k$ and $x_{k+1} < x + e$, we have 
$|y_k- y_p| \le(M + \ep)\cdot |x_k- x_p|<  (M+1) \cdot e  \le d$ and hence the point $\langle x_k, y_k\rangle \in [x, x+c] \times [y-d, y+d]$. 
Now $|F(x_k, y_k)| \le M$, so 
$|y_{k+1} 
- y_k| \le (|F(x_k, y_k)| + |\ep_k|)\cdot h \le    (M + \ep) \cdot h$ and 
\[
\begin{aligned}
|y_{k+1} - y_p| &\le |y_{k+1} - y_k| + |y_k - y_p| \\&\le (M + \ep) \cdot h
+ (M + \ep) \cdot |x_k - x_p| \\&= (M+ \ep)\cdot |x_{k+1}-x_p|.
\end{aligned}
\]
Finally, $|y_\ell - y_k| \le \sum_{i=k}^{\ell - 1} (|F(x_i, y_i)|+\ep)
\cdot h \le (M+\ep) \cdot |x_\ell - x_k| $.

If $x> 0$, a symmetric ``backward'' argument shows that the statement
holds also on the interval $(x-e, x]$.
\end{proof}

It follows easily that $Y$ as in \eqref{starstar} is the graph of a
real function.  If $\la x, y_1\ra$, $ \la x, y_2\ra \in Y$ are
standard, then there are $k$ and $\ell$ such that $\la x, y_1 \ra
\approx \la x_k, y_k\ra$ and $\la x, y_2 \ra \approx \la x_\ell,
y_\ell\ra$.  Then $x_k \approx x_\ell\approx x$ and $y_1 \approx y_k
\approx y_\ell \approx y_2 $ and we conclude that $y_1 = y_2$. Hence
$Y$ is the graph of a function, by Transfer.  From now on we write
$Y(x)$ for the value of $Y$ at $x \in \dom Y$.

\begin{lemma}
The domain of the function $Y$ is an open subset of $[0, \infty )$ containing $0$, $Y$ is continuous on $\dom Y$, and $Y'(x) = F(x, Y(x))$ holds for $x \in \dom Y$.
\end{lemma}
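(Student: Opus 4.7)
The three assertions will all be deduced from the local control provided by Lemma~\ref{techlemma} together with Proposition~\ref{propW}. Since $x_0 \approx 0$ and $y_0 \approx 0$, the pair $\la 0,0\ra$ lies in the set defining $Y$ in \eqref{starstar}, so $0 \in \dom Y$ and $Y(0) = 0$. For an arbitrary standard $x \in \dom Y$ with $Y(x) = y$, pick $p$ with $x_p - h < x \le x_p$ and $y_p \approx y$, and let $d,e,M > 0$ be the standard constants supplied by Lemma~\ref{techlemma}. For every standard $x' \in [x, x+e)$ (or $(x-e, x+e)$ when $x > 0$), choose $x_k \approx x'$; the iterate $y_k$ lies in $[y-d, y+d]$ and is therefore limited, so $\shh(y_k)$ exists and $\la x', \shh(y_k)\ra \in Y$ by Proposition~\ref{propW}. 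Every standard point of $\dom Y$ thus sits inside a standard open interval contained in $\dom Y$, and Transfer extends this to openness in the full sense.

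\textbf{Continuity.} On the standard neighbourhood just produced, for standard $x', x''$ we may take $x_k \approx x'$ and $x_\ell \approx x''$. The Lipschitz conclusion of Lemma~\ref{techlemma} then gives
\[
|Y(x') - Y(x'')| \approx |y_k - y_\ell| \le (M+\ep)\,|x_k - x_\ell| \approx M\,|x' - x''|,
\]
so after passing to shadows $|Y(x') - Y(x'')| \le M\,|x' - x''|$. Transfer extends this Lipschitz estimate from standard arguments to all arguments in the neighbourhood, delivering continuity (indeed local Lipschitz continuity) of $Y$.

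\textbf{The differential equation.} Fix standard $x \in \dom Y$ and standard $\eta > 0$. The continuity of $F$ at the standard point $\la x, Y(x)\ra$, combined with the Lipschitz control of the $y_i$, yields a standard $\delta > 0$ such that $|F(x_i, y_i) - F(x, Y(x))| < \eta$ whenever $|x_i - x| < \delta$. For a standard $x' \in \dom Y$ with $0 < x' - x < \delta$, take $x_k \approx x$ and $x_\ell \approx x'$; summing the recurrence gives
\[
y_\ell - y_k \;=\; \sum_{i=k}^{\ell - 1} \bigl(F(x_i, y_i) + \ep_i\bigr)\cdot h,
\]
whence $\bigl|(y_\ell - y_k)/(x_\ell - x_k) - F(x, Y(x))\bigr| \le \eta + \ep$. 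Since $x' - x$ is a nonzero standard real, the difference quotient attached to $Y$ differs from this one by an infinitesimal, so $\bigl|(Y(x') - Y(x))/(x' - x) - F(x, Y(x))\bigr| \le \eta$. The symmetric argument handles approach from the left when $x > 0$. By the classical $\ep$-$\delta$ definition of the derivative together with Transfer, $Y'(x) = F(x, Y(x))$ at every standard $x \in \dom Y$, and hence at every $x \in \dom Y$.

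\textbf{Main obstacle.} The content is essentially the classical proof that Euler's method converges; the only real subtlety is the bookkeeping. All nonstandard reasoning must be carried out at \emph{standard} test points $x'$, after which Transfer (cf.\ Remark~\ref{forallz}(1)) is invoked to lift the resulting $\in$-statements about the standard function $Y$ to all of its arguments, rather than attempting to evaluate $Y$ directly at the nonstandard nodes $x_k$ of the Euler grid.
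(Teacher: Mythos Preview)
Your proof is correct. The arguments for openness and continuity are essentially the paper's, only recast as a local Lipschitz estimate rather than an explicit $\epsilon$--$\delta$ computation.

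Where you genuinely diverge is in establishing $Y'(x)=F(x,Y(x))$. The paper first proves the integral identity
\[
Y(z)-Y(x)=\int_x^z F(t,Y(t))\,dt
\]
by comparing the Riemann sum $\sum_{i=k}^{\ell-1}(F(x_i,y_i)+\ep_i)h$ with the nonstandard definition of $\int_x^z F(t,Y(t))\,dt$ (Definition~\ref{defriemann}), using continuity of $Y$ to show $F(x_i,Y(x_i))\approx F(x_i,y_i)$; the differential equation then follows from the Fundamental Theorem of Calculus. You bypass the integral entirely: fixing a standard tolerance $\eta$, you use continuity of $F$ at $\la x,Y(x)\ra$ together with the Lipschitz bound on the iterates to force $|F(x_i,y_i)-F(x,Y(x))|<\eta$ uniformly along the relevant stretch of the Euler grid, and then read off the derivative directly from the averaged recurrence $(y_\ell-y_k)/(x_\ell-x_k)=\frac{1}{\ell-k}\sum(F(x_i,y_i)+\ep_i)$. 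Your route is more self-contained (it does not invoke the $\SPOT$ theory of the Riemann integral or the FTC), while the paper's route yields the integral form of the solution as an intermediate dividend. Both are legitimate; yours is arguably the more elementary of the two.
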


\begin{proof}
Clearly $0 \in \dom Y$. 
If $x \in \dom Y$ is standard and $y = Y(x)$, Lemma~\ref{techlemma} gives an interval $I = (x-e, x+e) $ (or $I = [0, e)$)
such that $x_k \in I$ implies $y_k \in [y-d, y+d]$, hence $y_k$ is limited and $Y(\shh(x_k)) = \shh(y_k)$ is defined. If $u \in I$ is standard, $u =\shh( x_k)$ holds for some $x_k \in I$. Hence $Y(u) \in [y-d, y+d]$ is defined for all standard $u \in  I$, and by Transfer, the same holds for all $u \in I$.

For the proof of  continuity at a standard $x\in \dom Y$ let $I$ be as above, $\epsilon  > 0$ be standard and $\delta = \epsilon /(M+1)$.
If $z \in I$ is standard and $ |x-  z| < \delta$, then there are $k$ and $\ell$ such that $x \approx   x_k $ and  $ z\approx   x_\ell$; moreover, 
$Y(x) \approx y_k$ and $Y(z) \approx y_\ell$.  
We have $|Y(x) - Y(z) |\approx |y_\ell - y_k| $ and 
$|y_k - y_\ell| \le   (M+\ep) \cdot |x_k- x_\ell| $,
where $|x_\ell - x_k| \approx |x -z| < \delta$. 
It follows  that $|Y(x) - Y(z)| \le (M+1)\cdot \delta = \epsilon$.
As usual, Transfer gives continuity for all $x \in \dom Y$.

It remains to prove that $Y'(x) = F(x, Y(x))$ holds for $x \in \dom Y$.
Let $I$ be as above, $x, z\in I$  be standard and wlog.\ $x \le z$.
In the notation of the previous paragraph, we have
\[Y(z) - Y(x)  \approx  y_\ell - y_k = \sum_{i=k}^{\ell - 1} (F(x_i, y_i) +\ep_i)\cdot h   \tag{1}
\]
and
\[ \int_{x}^{z} F(t, Y(t)) \, dt \approx   \sum_{i=k}^{\ell - 1} F(x_i, Y(x_i)) \cdot h =
 \sum_{i=k}^{\ell - 1} (F(x_i, y_i) +\delta_i)\cdot h \tag{2}\]
where $\delta_i \approx 0$ for $k \le i < \ell$.
The relation $\approx$ in (2) follows from the nonstandard theory of  integration (see Definition~\ref{defriemann}) and the fact that $F(t, Y(t))$ is continuous on $I$.
The relation $=$ in (2) is justified as follows: 
Let $x^\ast = \shh(x_i)$ and $y^\ast = \shh(y_i)$; then $Y(x^\ast) \approx y^\ast$ by the definition of $Y$ and
$Y(x_i) \approx Y(x^\ast) $ by the continuity of $Y$.
The continuity of $F$ then gives $F(x_i,y_i) \approx F(x^\ast, y^\ast) \approx F(x_i, Y(x_i))$.

The formulas (1) and (2)  imply  $Y(z) - Y(x) \approx \int_{x}^{z} F(t, Y(t)) \, dt$, hence $Y(z) - Y(x) = \int_{x}^{z} F(t, Y(t)) \, dt$ as both sides are standard. By Transfer, the relationship holds for all $x$, $ z \in I$.
It remains to apply the Fundamental Theorem of Calculus.
\end{proof}

Let $[0, a)$, $a > 0$, be the connected component of the domain of $Y$ containing $0$.

\begin{lemma}
The function $Y$ satisfies $\lim_{x\imp a^-} Y(x) = \pm \infty$.
\end{lemma}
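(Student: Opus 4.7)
The plan is to argue by contradiction. If $a = \infty$ the conclusion is vacuous, so assume $a \in \bbR$; then $a$ is standard by Transfer, since $Y$ and the connected component of $\dom Y$ containing $0$ are standard. Suppose $|Y(x)| \not\to \infty$ as $x \to a^-$. Then ``$\exists K > 0\, \forall \delta > 0\, \exists z \in (a - \delta, a)\, (|Y(z)| \le K)$'' is an $\in$-statement with standard parameters, so Transfer supplies a standard $K > 0$ together with a standard witness $z$ for every standard $\delta > 0$.

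Next I localize near $a$. Continuity of $F$ on the compact standard box $[a - \eta, a + \eta] \times [-K-1, K+1]$ gives a standard bound $|F(t,s)| \le M$, and I shrink $\eta$ so that $(M+1) \cdot 2\eta < 1/2$. I pick a standard $z \in (a - \eta/2, a)$ with $|Y(z)| \le K$, and by the construction of $Y$ in \eqref{starstar} via Proposition~\ref{propW} obtain an index $\ell$ with $x_\ell \approx z$ and $y_\ell \approx Y(z)$. The heart of the proof is a first-exit-time estimate on the Euler scheme: I claim $|y_j| \le K + 1$ for every $j \ge \ell$ with $x_j \le a + \eta$. Were this to fail, at the least such $j_0$ all intermediate $(x_j, y_j)$ would still lie in the box where $|F| \le M$, and the summation identity from Lemma~\ref{techlemma} would give $|y_{j_0} - y_\ell| \le (M+\ep)(x_{j_0} - x_\ell) \le 2(M+1)\eta + o(1) < 1/2$, whence $|y_{j_0}| \le |y_\ell| + 1/2 < K + 1$, contradicting the choice of $j_0$. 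This constant-juggling is the step I expect to be the main obstacle, but it becomes routine once one fixes the order in which $K$, $\eta$, and $\delta$ are chosen.

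To finish: for every $k$ with $x_k \approx a$ we now have $|y_k| \le K + 1$, so $y_k$ is limited and $\la a, \shh(y_k) \ra \in Y$ by the defining property of $Y$ in \eqref{starstar}; hence $a \in \dom Y$. But $\dom Y$ is open in $[0, \infty)$ by the preceding lemma, so a standard open neighborhood of $a$ lies inside $\dom Y$, contradicting that $[0, a)$ is a connected component of $\dom Y$. The contradiction forces $\lim_{x \to a^-} Y(x) = \pm \infty$.
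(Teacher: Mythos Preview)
Your argument is correct, but it takes a different route from the paper's.  The paper, after assuming the limit fails and fixing a standard bound $r$, observes that for every standard $n$ there is an index $k$ with $x_k \in (a - \tfrac{1}{n}, a)$ and $y_k \in (-r, r)$; it then applies Countable Idealization (Lemma~\ref{countideal}) to produce a single index $o$ with $x_o \approx a$ and $y_o \in (-r, r)$, so that $\la a, \shh(y_o)\ra \in Y$ directly.  You instead pick one standard $z$ close to $a$ with $|Y(z)| \le K$, locate the corresponding Euler index $\ell$, and run an explicit first-exit-time estimate (a tailored variant of the Lemma~\ref{techlemma} induction, with the box and the bound $M$ chosen in advance from $K$) to propagate the bound $|y_j| \le K+1$ forward to every index with $x_j \approx a$.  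Both arguments end at the same contradiction $a \in \dom Y$.  The paper's proof is shorter and exercises the nonstandard toolkit $\SPOT$ is built to showcase; your proof trades Countable Idealization for a hands-on ODE estimate, which is more elementary but partly reproves the content of Lemma~\ref{techlemma} with explicit control over the width $\eta$ so that the interval reaches past $a$.
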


\begin{proof}
We prove that for every standard $r > 0$ there is a standard $\epsilon > 0$ such that for all standard $x$, $a  - \epsilon < x < a$ implies $ | y(x)| \ge r$.

Assume that the statement is false and fix a standard $r > 0$ such that for every standard 
 $n \in \bbN$ there is a standard $x \in (a -  \frac{1}{n}, a) $ such that $Y(x) \in (-r, r)$.
Hence for every standard $n \in \bbN$ there is $k < N^2$ such that $x_k \in  (a - \frac{1}{n}, \,a)$ and 
$y_k \in (-r,r)$
(take $\la x_k, y_k \ra \approx \la x, Y(x)\ra$).
By Countable Idealization  (Lemma~\ref{countideal}), there exists $o < N^2$ such that $y_o \in (-r,r)$ and $x_o \in  (a - \frac{1}{n}, \,a)$ holds for all standard $n >0$.
It follows that $ x_o \approx  a$; we let $b = \shh(y_o)$.
By the definition of $Y$ then $\la a, b \ra \in Y$,  and hence $a \in \dom Y$, contradicting the fact that $[0, a)$ is a connected component of the domain of $Y$.
\end{proof}

\begin{proof}[Conclusion of proof of Theorem~$\ref{Theorem}$]
Let $Y$ be the function defined by formula\;\eqref{starstar}.  The
proof of Theorem~\ref{Theorem} is now concluded by letting $y = Y
\upharpoonright [0, a).$ We write $ y_{\bep}$ when it is necessary to
  indicate the dependence of $y$ on the perturbation $\bep$.
\end{proof}

\begin{remark}
Note that the solution $y$ is determined by the choice of the starting
point $x_0$, $y_0$ and the infinitesimal perturbation $\bep$.  Thus we
can single out a particular global solution of \eqref{star} by fixing
$N$ and letting $x_0 = 0$, $y_0 = 0$ and $\ep_k = 0$ for all $k <
N^2$.
\end{remark}

\begin{remark}
There are obvious generalizations that do not require any additional nonstandard ideas.
For example, the two-sided version:

\emph{Let $F:  \mathbb{R}^2 \to \mathbb{R}$ be a continuous function.
For every $\la a, b\ra \in \mathbb{R}^2$
there is an interval $(a^-, a^+)$  with $-\infty \le a^- < a < a^+ \le \infty$
and a function $y: (a^-, a^+) \to \mathbb{R}$ such that
\[
y(a)= b,\quad
\quad y'(x) =F(x,y(x)) \quad
\text{ holds for all \;} x \in (a^-, a^+),
\]
and if $a^-$ and/or $a^+$ is  in $\mathbb{R}$, then $\lim_{x\imp (a^-)^+} y(x)= \pm \infty$ and/or \\
$\lim_{x\imp (a^+)^-} y(x) = \pm \infty$.}

The domain  $\mathbb{R}^2$ of $F$ can be replaced by an open set $D \subseteq \bbR^2$. 
One obtains a solution that tends to the boundary of $D$, in the sense that for every compact $K \subseteq D$ there is $c < a^{+}$ such that $y(x) \notin K$ holds for all $c < x < a^{+}$, and analogously for $a^{-}$.
\end{remark}

The method generalizes to systems of equations.
\begin{theorem}
Let $\F: \mathbf{D} \to \mathbb{R}^n$ be continuous on an open set
$\mathbf{D} \subseteq \bbR^{n+1}$ and $\la 0, \0 \ra\in \mathbf{D}$.
The initial value problem
\begin{equation}
\label{star5}
 \y(0)= \0, \quad
\quad \y'(x) =\F(x,\y(x)) \quad
 \tag{$\star$}
\end{equation}
has a noncontinuable solution.
\end{theorem}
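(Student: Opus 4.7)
The plan is to adapt the proof of Theorem~\ref{Theorem} componentwise, with one extra bookkeeping step to confine the Euler trajectory to the open domain $\mathbf{D}$. By Transfer we may assume $\F$ and $\mathbf{D}$ are standard. Fix an unlimited $N$, set $h = 1/N$ and $x_k = kh$; starting from $\y_0 = \0$, define $\y_{k+1} = \y_k + \F(x_k, \y_k)\cdot h$ for as long as $\la x_k, \y_k\ra \in \mathbf{D}$, and freeze $\y_{k+1} := \y_k$ once the trajectory has left $\mathbf{D}$ (this keeps $\y_k$ formally defined for all $k \le N^2$ so the standardization machinery applies). The crucial preparatory lemma generalizes Lemma~\ref{techlemma}: at any standard $\la x, \y\ra \in \mathbf{D}$ that is a shadow of some alive $\la x_p, \y_p\ra$, continuity of $\F$ yields standard $c, d, M > 0$ with a compact box $[x-c, x+c] \times \prod_{j} [y^{(j)} - d, y^{(j)} + d]$ contained in $\mathbf{D}$ on which $\|\F\| \le M$; the usual induction shows the Euler iteration cannot leave this box (hence cannot freeze) for $x_k$ in a standard neighborhood of $x$, and satisfies $\|\y_k - \y_\ell\| \le (M+\ep)\cdot|x_k - x_\ell|$ there.

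Next, for each coordinate $j \le n$, apply Proposition~\ref{propW} to the S-continuous scalar function $w^{(j)}(x_k) = y_k^{(j)}$ to obtain standard $Y^{(j)}$; let $\mathbf{Y} = \la Y^{(1)},\ldots,Y^{(n)}\ra$, defined on the intersection of the domains. The joint Lipschitz bound above ties the $n$ domains together and guarantees that $\la x, \mathbf{Y}(x)\ra$, when it is a standard point recorded by $\mathbf{Y}$, arises as the shadow of a common surviving index $\la x_k, \y_k\ra$. From here the scalar proof transfers essentially verbatim: $\dom \mathbf{Y}$ is open, $\mathbf{Y}$ is continuous, and a componentwise application of the integration argument (Lemma~\ref{riemann}) gives $\mathbf{Y}'(x) = \F(x, \mathbf{Y}(x))$ on $\dom \mathbf{Y}$. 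Restrict $\mathbf{Y}$ to the connected component $[0, a)$ of its domain containing $0$ and call the restriction $\y$.

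Noncontinuability is argued as in the last lemma of Section~\ref{peano}: if $\y$ did not eventually leave every standard compact $K \sus \mathbf{D}$, Countable Idealization (Lemma~\ref{countideal}) would furnish an alive Euler index $o$ with $x_o \approx a$ and $\la x_o, \y_o\ra$ infinitely close to a standard point $\la a, \mathbf{b}\ra \in K \sus \mathbf{D}$, forcing $\la a, \mathbf{b}\ra \in \mathbf{Y}$ and contradicting that $[0, a)$ is a component of $\dom \mathbf{Y}$. The main obstacle, modest but not wholly trivial, is the bookkeeping in paragraph two: stitching the $n$ independent scalar standardizations into a single vector-valued graph via the joint Lipschitz bound, and checking that the freezing convention never affects standard shadows. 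Everything else is a routine vectorial rewrite of the scalar case.
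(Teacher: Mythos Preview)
Your proposal is correct and follows essentially the same route as the paper. The only noteworthy difference is in how Proposition~\ref{propW} is invoked: the paper extends the relations $\approx$ and $\ge$ componentwise to $\bbR^n$ and then observes that the proof of Proposition~\ref{propW} (and the supporting Lemma~\ref{defofZ}) goes through verbatim for vector-valued $w$, yielding a single standard $\mathbf{Y}$ in one step; you instead apply the scalar Proposition~\ref{propW} $n$ times and stitch. Your stitching argument is fine (the common Lipschitz bound guarantees a common surviving index), but the paper's route avoids that bookkeeping entirely. Your explicit freezing convention and compact-exhaustion argument for noncontinuability supply details that the paper leaves to the reader via the Remark preceding the theorem.
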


\begin{proof}
For $\mathbf{u} = \la u_0, \ldots , u_{n-1}\ra$ and $\mathbf{v} = \la v_0, \ldots , v_{n-1}\ra$ in $\bbR^n$
we let $\mathbf{u} \approx \mathbf{v}$ if $u_i \approx v_i$ for all $i < n$, and
$\mathbf{u} \ge \mathbf{v}$ if $u_i \ge v_i$ for all $i < n$.
With this understanding, the material in Section~\ref{examples}, and in particular  Proposition~\ref{propW}, generalizes straightforwardly to functions $w$ with $\ran w \subseteq \bbR^n$.
One can then follow the proof of Theorem~\ref{Theorem}.
\end{proof}


\section{Applications of infinitesimal perturbations}
\label{perturbs}

Recall (see Conclusion of proof of Theorem~\ref{Theorem}) that
$y_{\bep} $ is a standard function defined via \eqref{starstar}.

\begin{lemma} \label{tags}
Let $F$ be standard. For every standard solution $y$ of \eqref{star}
defined on a standard interval $[0,a)$ and every standard $c < a$,
  $c>0$, there is an infinitesimal perturbation $\bep$ such that
  $y(x)= y_{\bep}(x)$ holds for $0 \le x \le c$.
\end{lemma}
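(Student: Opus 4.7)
The plan is to invert the Euler recursion: rather than choosing the perturbation and then reading off the iterates, I will prescribe $\ep_k$ so that the iterates $y_k$ are forced to track the given standard solution $y(x_k)$ along the grid. Specifically, for those indices $k < N^2$ with $x_{k+1}$ still in $\dom y$, set
\[
\ep_k \;=\; \frac{y(x_{k+1}) - y(x_k)}{h} - F(x_k, y_k),
\]
and put $\ep_k = 0$ otherwise. With this choice, the Euler recursion $y_{k+1} = y_k + (F(x_k, y_k) + \ep_k)h$ telescopes to $y_{k+1} - y(x_{k+1}) = y_k - y(x_k)$, so by induction $y_k - y(x_k) = y_0 - y(x_0)$ for every such $k$.

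First, I would pick a standard $c'$ with $c < c' < a$ and work on the standard compact interval $[0, c']$, on which the standard continuous functions $y$ and $F$ are uniformly continuous in $\ZF$ and therefore S-continuous. Concretely, $y$ is S-continuous on $[0, c']$, and $F$ is S-continuous on the standard compact box $K := [0, c'] \times [-R, R]$ for a standard $R$ chosen to exceed $\max_{[0,c']} |y|$ by a cushion.

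Next, I would verify that each $\ep_k$ is infinitesimal. Applying the classical mean value theorem to the differentiable standard function $y$ gives $\frac{y(x_{k+1}) - y(x_k)}{h} = y'(\xi_k) = F(\xi_k, y(\xi_k))$ for some $\xi_k \in [x_k, x_{k+1}]$, whence $\ep_k = F(\xi_k, y(\xi_k)) - F(x_k, y_k)$. Since $\xi_k \approx x_k$, S-continuity of $y$ yields $y(\xi_k) \approx y(x_k)$, and the telescoping identity together with $y_0 \approx 0 = y(0) \approx y(x_0)$ gives $y_k \approx y(x_k)$. Thus both evaluation points lie in the standard compact $K$ at infinitesimal distance, and S-continuity of $F$ delivers $\ep_k \approx 0$.

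Finally, given any standard $x \in [0, c]$, I would pick a grid point $x_k \approx x$; then $y_k \approx y(x_k) \approx y(x)$, so by the defining formula~\eqref{starstar} one has $\langle x, y(x)\rangle \in Y$, i.e.\ $y_{\bep}(x) = y(x)$. The main delicate point I expect is the boundary bookkeeping: working with $c' > c$ is what keeps $x_{k+1}$ inside $\dom y$ whenever the formula for $\ep_k$ is invoked, and the small initial drift $y_0 - y(x_0)$ is harmless because it is infinitesimal rather than exactly zero.
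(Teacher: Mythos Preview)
Your proposal is correct and follows essentially the same route as the paper: both invert the Euler recursion and use the mean value theorem to see that $\ep_k = F(\xi_k, y(\xi_k)) - F(x_k, y_k) \approx 0$. The only cosmetic difference is that the paper chooses $y_0 = y(0)$ (so that $y_k = y(x_k)$ exactly and one can write $\ep_k = y'(t_k) - y'(x_k)$ with $t_k$ the least MVT point), whereas you carry the constant infinitesimal drift $y_0 - y(x_0)$ through the telescoping; the extra bookkeeping with $c'$ and the compact box $K$ is a reasonable way to make the S-continuity step explicit.
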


\begin{proof}
By the mean value theorem, for each $k$ such that $x_{k+1} \le c$ there is  $t \in [x_k, x_{k+1}]$ such that 
$y(x_{k+1}) - y(x_k) = y'(t)\cdot h$.
Let $t_k$ be the least such $t$ (as $y'$ is continuous, the set of $t$ with this property is closed).
Then let $\ep_k = F(t_k, y(t_k)) - F(x_k, y(x_k)) = y'(t_k) -y'(x_k)\approx 0$. For $x_{k+1} > c$ let $\ep_k = 0.$
Let $y_0 = y(0)$; it follows  that $y_k = y(x_k) $ for all $k$ such that $x_{k+1} \le c$: 
assuming the claim is true for $k$, we have
$y_{k+1} =  y_k + (F(x_k, y_k) + \ep_k)\cdot h = y(x_k) +  F(t_k, y(t_k))\cdot h =y(x_k) + y'(t_k)\cdot h
= y(x_{k+1})$.

If $x \in [0, c]$ is standard, take $x \approx x_k$ for $x_{k+1} \le
c$; then $y_{\bep} (x) \approx y_k = y(x_k) \approx y(x)$, so
$y_{\bep} (x) = y(x)$.
\end{proof}

\begin{corollary}\label{extension}
Every solution of \eqref{star} extends to a global solution.
\end{corollary}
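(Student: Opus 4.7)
The plan is to reduce to standard data by Transfer, dispose of the global case trivially, and otherwise build a global extension by invoking Peano's theorem at the right endpoint of $\dom y$ and gluing. Since the statement ``every solution of \eqref{star} extends to a global solution'' is an $\in$-statement universally quantified over $F$ and $y$, by Transfer it suffices to prove it for standard $F$ and standard $y$; then $\dom y = [0, a)$ is standard, so $a$ is standard as well. If $y$ happens to be global, take $\tilde y := y$.

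Otherwise, by the definition of global, $y$ admits a continuation to a solution on some interval $[0, a')$ with $a' > a$. Any such continuation is continuous at $a$, so the one-sided limit
\[
b := \lim_{x \to a^-} y(x)
\]
exists; applying Transfer to the $\varepsilon$-$\delta$ definition of this limit shows that $b$ is standard. I would then apply the variant of Theorem~\ref{Theorem} noted in the Remark at the initial condition $\la a, b \ra$ (equivalently, apply Theorem~\ref{Theorem} itself to the translated equation $G(x, z) := F(x + a, z + b)$ with initial value $\la 0, 0 \ra$ and then shift back) to obtain a standard global solution $\hat y : [a, a'') \to \bbR$ with $\hat y(a) = b$ and with $\lim_{x \to a''^-} \hat y(x) = \pm \infty$ whenever $a'' < \infty$.

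The extension is then assembled by setting $\tilde y(x) := y(x)$ for $x \in [0, a)$ and $\tilde y(x) := \hat y(x)$ for $x \in [a, a'')$. The two pieces agree at $a$ by the construction of $b$, so $\tilde y$ is continuous; at $x = a$, continuity of $F$ together with $\lim_{x \to a^-} y(x) = b = \hat y(a)$ gives both one-sided derivatives equal to $F(a, b)$, so $\tilde y$ solves \eqref{star} throughout $[0, a'')$. Globality of $\tilde y$ is inherited from $\hat y$. (If one prefers to foreground Lemma~\ref{tags}, apply it to the standard extension $\tilde y$ with $c = a$ to realize $\tilde y$ explicitly as some $y_{\bep}$, but the direct gluing already suffices.)

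The main — and essentially only — obstacle is establishing the existence of $b = \lim_{x \to a^-} y(x)$. I would like to highlight that no separate blow-up argument is needed: the hypothesis of non-globality already supplies a continuous continuation at $a$, from which the limit reads off directly, and standardness of $b$ follows by Transfer. Everything else is routine verification of the glued function's regularity at the seam.
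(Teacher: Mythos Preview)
Your proof is correct, but it takes a different route from the paper's. The paper leans directly on Lemma~\ref{tags}: given a standard non-global solution $y$ on $[0,c)$, it picks a standard continuation $\tilde y$ to some $[0,a)$ with $a>c$, and then Lemma~\ref{tags} (applied to $\tilde y$ with parameter $c$) produces a perturbation $\bep$ with $y_{\bep}\upharpoonright[0,c)=\tilde y\upharpoonright[0,c)=y$; globality of $y_{\bep}$ is automatic from Theorem~\ref{Theorem}. You instead read off the endpoint value $b$ from the continuation, restart Peano at $\la a,b\ra$, and glue. Your route is more classical and makes the corollary self-contained modulo Theorem~\ref{Theorem} (the perturbation machinery is never invoked, and your parenthetical about Lemma~\ref{tags} is indeed optional). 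The paper's route, by contrast, keeps the extension explicitly in the form $y_{\bep}$, which dovetails with the surrounding narrative (Theorems~\ref{Theorem} and~\ref{perturb} characterize global solutions via perturbations) and sidesteps the seam-regularity check entirely.
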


\begin{proof}
Let $y$ defined on $[0, c)$ be a standard solution of \eqref{star}
  with $F$ standard. If $y$ is not global, then it has a standard
  continuation $\tilde{y}$ to an interval $[0, a)$ with $c < a$. By
    Lemma~\ref{tags} $y$ has a continuation $y_{\bep}$ which is global
    by Theorem~\ref{Theorem}.  By Transfer, the claim holds for all
    solutions $y$ and all functions $F$.
\end{proof}

\begin{theorem}\label{perturb}
For every standard global solution $y$ of \eqref{star} there is an
infinitesimal perturbation $\bep$ such that $y= y_{\bep}$.
\end{theorem}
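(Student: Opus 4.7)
The plan is to prove the statement for standard $y$, $F$, and $a$ (the general case then follows by Transfer) and to upgrade the local matching of Lemma~\ref{tags} to a global one via Countable Idealization. I fix the starting data $x_0 = 0$, $y_0 = 0$. On the standard countable set $A = \bbN \times (\bbQ \cap [0,\infty))$, consider the $\in$-formula $\phi((n,c), \bep)$ asserting that $\bep = \la \ep_0, \ldots, \ep_{N^2-1}\ra$ satisfies $|\ep_k| < 1/n$ for every $k < N^2$ and, provided $c < a$, the Euler iterates $y_k$ produced from $\bep$ satisfy $|y_k - y(x_k)| < 1/n$ for all $k$ with $x_k \le c$ (this uses $y$, $F$, $a$, $N$, $h$, $x_0$, $y_0$ as parameters, all of them admissible in Corollary~\ref{corcountideal}).

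To verify the finite premise of Corollary~\ref{corcountideal}, let $F \sus A$ be a standard finite set. Put $c' = \max\{c : (n,c) \in F \,\wedge\, c < a\}$, or $c' = 0$ if this set is empty, and choose a standard rational $c^{\ast} \in (c', a)$ with $c^{\ast} > 0$. Lemma~\ref{tags} applied to $c^{\ast}$ yields an infinitesimal perturbation $\bep$ for which $y_k = y(x_k)$ exactly whenever $x_{k+1} \le c^{\ast}$, with $\ep_k = 0$ for the remaining $k$. Since $h$ is infinitesimal and $c' < c^{\ast}$ is standard, $x_k \le c \le c'$ forces $x_{k+1} \le c^{\ast}$. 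Hence every $\ep_k$ is either zero or infinitesimal (so $|\ep_k| < 1/n$ for each standard $n$), and $|y_k - y(x_k)| = 0$ whenever $x_k \le c$, confirming $\phi((n,c), \bep)$ for all $(n,c) \in F$.

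Corollary~\ref{corcountideal} then supplies a single $\bep$ with $\phi((n,c), \bep)$ valid for all standard $(n,c) \in A$. Varying $n$ at fixed $k$, I get $\ep_k \approx 0$, so $\bep$ is an infinitesimal perturbation. For any standard $x \in [0, a)$ I pick a standard rational $c \in (x, a)$; for any $k$ with $x_k \approx x$ one has $x_k < c$, so $|y_k - y(x_k)| < 1/n$ for every standard $n$. Thus $y_k \approx y(x_k) \approx y(x)$ by continuity of the standard $y$ at the standard point $x$, and $Y(x) = \shh(y_k) = y(x)$. Transfer upgrades this agreement to all of $[0, a)$, and $\dom y_{\bep} = [0, a)$ follows from $y$ being global: any strictly larger connected component of $\dom Y$ would provide a continuous extension of $y$, contradicting $\lim_{x \imp a^-} y(x) = \pm \infty$ if $a < \infty$, and being impossible if $a = \infty$. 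Therefore $y = y_{\bep}$.

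The principal obstacle is securing the uniform bound $|\ep_k| < 1/n$ over \emph{all} $k < N^2$ in the finite premise, even for $k$ whose $x_k$ is unlimited, where continuity of $F$ or $y$ cannot be invoked. This is sidestepped by applying Lemma~\ref{tags} with a slightly inflated standard cutoff $c^{\ast}$, which sets $\ep_k = 0$ on the unwanted tail and keeps the relevant perturbations infinitesimal.
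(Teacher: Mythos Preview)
Your proof is correct and follows essentially the same strategy as the paper: both arguments feed the local matching from Lemma~\ref{tags} into Countable Idealization to manufacture a single infinitesimal perturbation that works on all of $[0,a)$. The only differences are cosmetic---you index over $\bbN \times (\bbQ \cap [0,\infty))$ via Corollary~\ref{corcountideal} and ask for approximate matching $|y_k - y(x_k)| < 1/n$, whereas the paper indexes over $\bbN$ via Lemma~\ref{countideal} using a fixed sequence $c_n \to a$ and the explicit choice $\ep_k = y'(t_k) - y'(x_k)$; you also spell out the verification that $\dom y_{\bep} = [0,a)$, which the paper leaves implicit. (Two trivial nits: the symbol $F$ is already taken by the right-hand side of \eqref{star}, and your bound $1/n$ needs $n \ge 1$.)
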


\begin{proof}
Assume the domain of $y$ is a standard interval $[0, a)$ (possibly $a =+\infty$).
We fix  a standard strictly increasing sequence $\la c_n \,\mid\, n \in \bbN\ra$ such that $c_0 > 0$ and $\lim_{n \to \infty} c_n = a$.
The proof of Lemma~\ref{tags} (with $c = c_n$) justifies the following statement:\\
For every standard $n \in \bbN$ there is $\bep = \la \ep_k \,\mid\, 0 \le k < N^2\ra$ such that 
for all $m \le n$ and for all  $ k< N^2$
$$ \left( x_{k +1} \le c_m \imp \ep_k =   y'(t_k) -  y'(x_k)\right) \,\wedge\, \left( x_{k +1} > c_m \imp |\ep_k| < \tfrac{1}{m+1}\right).
$$
By Countable Idealization  (Lemma~\ref{countideal}) there is $\bep$ such that for all standard $ n\in \bbN$ and for all  $ k< N^2$
$$ \left( x_{k +1} \le c_n \imp \ep_k =   y'(t_k) -  y'(x_k)\right) \,\wedge\, \left( x_{k +1} > c_n \imp |\ep_k| < \tfrac{1}{n+1}\right).
$$
It folows that $\ep_k \approx 0$ for all $k < N^2$, so $\bep$ is a perturbation.
As in the proof of Lemma~\ref{tags}, $y(x) = y_{\bep}(x)$ holds for every standard $x \in [0, c_n]$,  for every standard $n \in \bbN$, hence $y(x) = y_{\bep}(x)$ holds for every standard $x \in [0, a)$.
By Transfer, $y(x) = y_{\bep}(x)$  for all $x \in [0, a)$.
\end{proof}

The results of this section generalize to the system of equations
\eqref{star5}.


\section{Osgood's Theorem in {\bf SPOT}}
\label{osgoodsection}

\begin{definition}
A solution $\tilde{y}$ of \eqref{star} defined on an interval $I$ is
\emph{maximal on} $I$ if $\tilde{y}(x) \ge y(x)$ holds for every
solution $y$ of \eqref{star} and every $x \in I \cap \dom y$.  The
solution $\tilde{y}$ is \emph{maximal} if it is global and maximal on
its domain.
\end{definition}

\begin{theorem} \emph{(Global Osgood's Theorem)} \label{osgood}
The initial value problem \eqref{star} has a unique maximal solution.
\end{theorem}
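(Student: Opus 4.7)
My plan is to handle uniqueness by a direct comparison and existence in two stages following the hint in the introduction: first a local maximal solution on some small standard interval $[0,c]$, then the global maximal obtained by gluing. For \textbf{uniqueness}, if $\tilde y_1,\tilde y_2$ are both maximal with respective domains $[0,a_1),[0,a_2)$, applying the maximality of $\tilde y_1$ to $\tilde y_2$ on $[0,\min(a_1,a_2))$ gives $\tilde y_1 \ge \tilde y_2$ there; symmetrically $\tilde y_2 \ge \tilde y_1$, so they agree. If $a_1<a_2$, then $\tilde y_2$ is a proper extension of $\tilde y_1$ past $a_1$, contradicting globality of $\tilde y_1$.

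For \textbf{existence}, by Transfer I may assume $F$ is standard. Choose a standard $c>0$ small enough that the standard rectangle argument yields a uniform a priori bound: some standard $b>0$ with every standard solution of \eqref{star} on $[0,c]$ valued in $[-b,b]$, hence $L$-Lipschitz with $L=\sup|F|$ on $[0,c]\times[-b,b]$. Using Corollary~\ref{extension} I recast the problem in terms of global solutions and set, for standard $x\in[0,c]$,
\[
\tilde y(x)=\sup\{Y(x) : Y\text{ a standard global solution of \eqref{star} with }x\in\dom Y\}.
\]
This is a standard real in $[-b,b]$, and Lemma~\ref{stinternal} applied to its $\st$-prenex defining formula promotes $\tilde y$ to a standard function on $[0,c]$; as a pointwise sup of $L$-Lipschitz functions it is itself $L$-Lipschitz. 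The crucial step is showing that $\tilde y$ solves \eqref{star}. For each standard $x^\ast\in[0,c]$, the sup definition yields, for every standard $n$, a global solution $Y_n=y_{\bep_n}$ (via Theorem~\ref{perturb}) with $Y_n(x^\ast)>\tilde y(x^\ast)-1/n$; Countable Idealization (Lemma~\ref{countideal}) then delivers a single infinitesimal perturbation $\bep^\ast$ such that $y_{\bep^\ast}(x^\ast)\approx \tilde y(x^\ast)$, while $y_{\bep^\ast}\le \tilde y$ pointwise. Combining pointwise domination with the Euler computation from the proof of Theorem~\ref{Theorem}, one derives the integral identity $\tilde y(x_2)-\tilde y(x_1)=\int_{x_1}^{x_2}F(t,\tilde y(t))\,dt$ for all standard $0\le x_1<x_2\le c$, whence $\tilde y$ satisfies \eqref{star} by FTC. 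This is the content of Lemma L2.

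Finally, for the \textbf{global} stage, the very same formula defines $\tilde y(x)$ consistently for every standard $x$ in the union of the domains of standard global solutions, so the local maximal solutions glue to a function $\tilde Y$ on some interval $[0,a)$ that solves \eqref{star}. To see $\tilde Y$ is global, suppose $a<\infty$ and $\lim_{x\to a^-}|\tilde Y(x)|$ were finite; then Peano applied to the IVP at $(a,\lim)$, together with a second invocation of the local construction, would extend $\tilde Y$ to a local maximal on $[0,a+\delta)$, contradicting maximality of $a$. The \textbf{principal obstacle} is the ODE verification inside the local step: the classical route runs through Arzel\`a--Ascoli, which consumes Countable Choice; Countable Idealization plus the Euler grid of Section~\ref{peano} serves as the choice-free substitute, but turning a single ``witness'' perturbation $\bep^\ast$ at one point into the full integral identity uniformly over $[0,c]$---so that one controls $|F(t,y_{\bep^\ast}(t))-F(t,\tilde y(t))|$ on the whole interval rather than just at $x^\ast$---is the delicate piece I expect Lemma L2 to shoulder.
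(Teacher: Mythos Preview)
Your uniqueness argument and the globalization step are sound and match the paper. The gap is exactly where you flag it. Countable Idealization gives, for a fixed standard $x^\ast$, a perturbation $\bep^\ast$ with $y_{\bep^\ast}(x^\ast)\approx\tilde y(x^\ast)$ and $y_{\bep^\ast}\le\tilde y$ on $[0,c]$; but $\tilde y(t)-y_{\bep^\ast}(t)$ can be appreciable away from $x^\ast$, since the supremum at different points may be approached by entirely different solutions. You therefore have no control of $|F(t,\tilde y(t))-F(t,y_{\bep^\ast}(t))|$ over the whole interval, and the integral identity does not follow. Idealizing over a countable dense set of points fails for the same reason: for finitely many $q_0,\dots,q_n$ there is no a priori single solution within $1/n$ of the supremum at all of them simultaneously. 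The paper's Lemma~\ref{L2} does not shoulder this; it is about a different function.

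The paper avoids the pointwise supremum altogether. It fixes an infinitesimal $\ep>0$, solves the \emph{perturbed} problem $z'=F(x,z)+\ep$, $z(0)=0$ on a standard rectangle (Lemma~\ref{L0}), and shows by an elementary comparison (Lemma~\ref{L1}) that this solution $u$ dominates every solution of \eqref{star}. The local maximal solution is then $y_m(x)=\shh(u(x))$, obtained as a standard function via Proposition~\ref{propW} since $u$ is S-continuous. Because $u$ is a \emph{single} internal curve already dominating everything, the obstacle you identify disappears: Lemma~\ref{L2} runs the mean-value trick of Lemma~\ref{tags} once along the whole Euler grid to exhibit one perturbation $\bep$ with $y_k=u(x_k)$ for all $k$, hence $y_m=y_{\bep}$ on all of $[0,e]$. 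The $+\ep$ shift of $F$ is the missing idea in your outline. (Incidentally, your appeal to Lemma~\ref{stinternal} is unnecessary: ``global solution of \eqref{star}'' is an $\in$-property with the standard parameter $F$, so by Transfer the supremum over all global solutions is already a standard function of $x$.)
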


\begin{proof}
We assume that $F$ is standard, fix an infinitesimal $\ep > 0$ and
consider the initial value problem
\begin{equation}
\tag{$\ast\!\ast\!\ast$}
\label{star3}
z(0)= 0,\quad
\quad z'(x) =F(x,z(x)) + \ep . 
\end{equation}

\begin{lemma} \label{L0} 
There exist standard $e, M> 0$ such that, for $ I= [0, e]$ and $J =
[-(M+1) \cdot e, (M+1) \cdot e]$, the function $F + \ep$ is bounded by
$M$ on $I \times J$ and the initial value problem \eqref{star3} has a
solution $u :I \to J$.
\end{lemma}

\begin{proof}[Proof of Lemma~$\ref{L0}$]
The arguments given in the proof of Theorem~\ref{Theorem} establish the following uniform result:

Given standard $c, d ,M >0$ there is a standard $e>0$ such that for every standard $G$, continuous and  bounded by $M$  on $[0,c) \times [-d, d]$, there is a solution 
$y: [0,e] \to [-(M+1)\cdot e, (M+1) \cdot e]$ of the initial value problem $y(0)=0$, $y'(x) = G(x,y(x))$.
By Transfer, the result holds for all such functions $G$. 

Returning to \eqref{star3}, fix standard $c,d,M_0 > 0$ so that $F$ is
bounded by $M_0$ on $[0,c) \times [-d, d]$.  Let $G = F + \ep$ and $M
  = M_0 + 1$. The paragraph above gives the desired solution $u$.
\end{proof}

\begin{lemma} \label{L1}
Let $u$ be the solution of the initial value problem \eqref{star3}
furnished by Lemma~\ref{L0} and let $y(0) = 0$ and $y'(x) = F(x,
y(x))$ for all $x \in [0, a)$. Then $u(x) \ge y(x)$ holds for all $x
  \in [0, \min\{e,a\})$.
\end{lemma}

\begin{proof}[Proof of Lemma~$\ref{L1}$]
Let 
\[
\alpha = \sup \{\alpha' \,\mid\, u(x) \ge y(x) \text{ holds for all }
0 \le x \le \alpha'\}
\]
and assume $\alpha < \min\{e,a\}$. 
Then $u(\alpha) \ge y(\alpha) $ and $u'(\alpha) - y'(\alpha) = \ep > 0$.
It follows that $u(x) >y(x)$ holds on some interval $(\alpha, \alpha']$ for $\alpha' > \alpha$, a contradiction.
\end{proof}

We next prove the existence of a local maximal solution.  We
let $$y_{m}(x) = {}^{\st} \{ \shh (u(x)) \,\mid\, x \in [0, e]\}.$$ The
existence of the standard function $y_m$ defined on $[0, e]$ in
$\SPOT$ follows from Proposition~\ref{propW} (with $I = [0,e]$ and $w
= u$), using the observation that $u$ is S-continuous: $|x - z
|\approx 0$ implies $ |u(x) - u(z)| = |u'(t)|\cdot|x -z|= |F(t, u(t))
+ \ep|\cdot|x -z| \le M \cdot | x - z| \approx 0$ (where $t$ is
between $x, z \in I$).

If $y $ is a standard solution of \eqref{star}, then $y_{m}(x) = \shh
(u(x)) \ge \shh y(x)) = y(x)$ holds for all standard $x \in [0,
  \min\{e,a\})$ by Lemma~\ref{L1}, so $y_{m}$ dominates all standard
  solutions of \eqref{star}.

\begin{lemma} \label{L2}
The function $y_{m}$ is a solution of \eqref{star} on $[0, e]$.
\end{lemma}

\begin{proof}[Proof of Lemma~$\ref{L2}$]
To this effect it suffices to find an infinitesimal perturbation
$\bep$ such that $y_m = y_{\bep}$ on $[0,e]$.

As in the proof of Theorem~\ref{tags}, for each $k$ with $x_{k+1} \le
e$ let $t_k$ be the least $t \in [x_k, x_{k+1}]$ such that $u(x_{k+1})
- u(x_k) = u'(t)\cdot h$.  Then let $\ep_k = F(t_k, u(t_k)) - F(x_k,
u(x_k))$; if $x_{k+1} > e$ let $\ep_k = 0.$

Let $y_0 = u(0) = 0$.
If  $y_k = u(x_k) $, then
\[
\begin{aligned}
y_{k+1} &= y_k + (F(x_k, y_k) + \ep_k)\cdot h \\&= u(x_k) + F(t_k,
u(t_k))\cdot h \\&= u(x_k) + u'(t_k)\cdot h =u(x_{k+1}).
\end{aligned}
\]
It follows that $y_k = u(x_k) $ for all $k$ such that $x_{k+1} \le e$. 

We still have to show that $\ep_k \approx 0$. 
The function $u$ is S-continuous: 
$x,z \in [0, e]$ and $x \approx z$ imply
\[
\begin{aligned}
|u(x) - u(z)| \le \Big|\int_{z}^{x} (F(t,u(t)) + \ep)\,dt \Big| \le
M\cdot |x - z| \approx 0.
\end{aligned}
\]
So $t_k \approx x_k$ implies $u(t_k) \approx u(x_k)$ and $ F(t_k,
u(t_k)) \approx F(x_k, u(x_k))$, because $F$ is continuous at $\la
\shh(x_k), \shh(u(x_k))\ra$.

For standard $x \in [0, e]$ take $x \approx x_k$; we have $y_{m}(x) =
\shh(u(x)) = \shh(u(x_k)) = \shh (y_k) = y_{\bep}(x)$.  By Transfer,
$y_m (x) = y_{\bep}(x)$ holds for all $x \in [0,e]$.
\end{proof}

The above  argument establishes the existence of a solution $y_m$ which is maximal over some interval $[0, e)$. The maximal solution $y_{\max}$ is obtained as the union of all such solutions; it is defined and maximal on some interval $I$. It remains to prove that $I =  [0,a)$ (with $0 < a \le +\infty$)  and that $y_{\max}$ is global.
  If $I =  [0,a]$ for $a \in \bbR$,
we could apply the above argument to the initial value $\la a, y_{\max}(a)\ra$ and obtain a  continuation of $y_{\max}$ that is defined and maximal on a larger interval. Similarly, if $y_{\max}$
could be continued to some (non-maximal) standard solution $y$, then we could apply the above argument to the initial value $\la a, y(a)\ra$.
\end{proof}

This concludes the proof of Theorem~\ref{osgood} for standard $F$. By
Transfer, the theorem is true for all $F$.  \qed

\section{Final Remarks.}

\begin{remark}\label{others}
The proofs of the global Peano theorem we found in the literature often simply appeal to Zorn's lemma (eg.~Ganesh~\cite{Ganesh}, Theorem~4.7).  The more careful proofs depend on $\ADC$, usually without  mentioning it explicitly.
 Hale~\cite{Hale} in his proof of global Peano theorem (Theorem 2.1, p. 17) writes:
\begin{enumerate}
\item[] ``{\ldots}~there is a monotone increasing sequence $\{b_n\}$
  constructed as above so that the solution $x(t)$ of (1.1) on $[a,
    b]$ has an extension to the interval $[a, b_n]$ and $(b_n,
  x(b_n))$ is not in $\bar{V_n}$. Since the $b_n$ are bounded above,
  let $\omega = \lim_{n \to \infty} b_n $. It is clear that $x $ has
  been extended to the interval $[a, \omega)$ {\ldots}''
\end{enumerate}
What is actually clear is that his construction yields  solutions $x_n(t)$ on $[a, b_n]$  for each $n$, and each  $x_n(t)$ has extensions to some $x_{n+1}(t)$.
The axiom $\ADC$ is needed to justify the existence of $x(t)$.
Similarly Hartman~\cite[ II,~3.1, p. 13] {Ha} constructs an increasing sequence $\{ b_n \}$  such that any solution on $[a, b_n]$ has an extension to a solution on $[a, b_{n+1}]$. 
Here $\ADC$ is needed to justify the existence of a solution on $[a, \omega_{+}]$ for $\omega_{+} = \lim_{n  \to \infty} b_n$.   In Hartman's proof of III, Lemma 2.1, a key step to the proof of III, Theorem 2.1 (Osgood's theorem),  $\ACC$ is used implicitly to obtain the sequence $\{ u_n(t) \}$.
Similar unacknowledged use of $\ADC$ appears in  Kurzweil~\cite{Ku86}, pp.\;355--356.

\end{remark}

\begin{remark}\label{reverse}
Simpson~\cite{Si2} carried out a thorough study of the axioms needed to prove the \emph{local} versions of Peano and Osgood theorems. He showed that (over $\mathbf{RCA}_0$) the local Peano theorem is equivalent to $\mathbf{WKL}_0$ and the local Osgood theorem is equivalent to $\mathbf{ACA}_0$ (see  Simpson~\cite{Si} for the description of these systems of second order arithmetic and additional information). In particular, the proofs of local versions of these theorems do not need any form of $\AC$.

\end{remark}

\begin{remark}\label{absoluteness}
The conservativity of $\SPOT$ over $\ZF$ and the results of this paper imply that global Peano and Osgood theorems are provable in $\ZF$.

In a discussion on MathOverflow~\cite{MO}, James Hanson pointed out
that the same conclusion follows from Shoenfield's absoluteness
theorem.  A consequence of this theorem is that every $\Pi^1_4$
sentence provable in $\ZFC$ is provable in $\ZF$ alone.  The global
Peano theorem can be expressed by a $\Pi^1_4$ sentence, and therefore
it is provable in $\ZF$. The $\ZF$ proof obtained by conversion of the
$\ZFC$ proof by this method is far from elementary; in addition to
Shoenfield's absoluteness theorem, it relies on the notion of
relatively constructible sets.

\end{remark}

\textbf{Clarification of a point in~\cite{HK}.}  In Section~4
of~\cite{HK}, $\cM$-\emph{generic filters} on a forcing notion $\bbP
\in \cM$ are defined (see Definition 4.10).  Following a paragraph
that explains how such filters are constructed, it is stated that
``$\cM$-generic filters $\cG \subseteq M \times M$ on $\bbH$ are
defined and constructed analogously.''  There is a difference though,
in that the forcing notion $\bbH$ is a proper class from the point of
view of $\cM$.  The $\cM$-generic filters $\cG \subseteq M \times M$
on $\bbH$ have to meet every class $D \subseteq M$ which is definable
in $\cM$ (with parameters from $M$) and dense in $\bbH$.  As there are
only countably many such classes, the construction of a generic filter
on $\bbH$ can proceed analogously to the construction of a generic
filter on $\bbP$.

\section*{Acknowledgments}

We are grateful to Dalibor Pra\v{z}\'{a}k for helpful comments.

\bibliographystyle{amsalpha}
\end{document}